%
%
%


\documentclass{amsart}

\usepackage{amssymb}


\newtheorem{theorem}{Theorem}[section]
\newtheorem{lemma}[theorem]{Lemma}
\newtheorem{cor}[theorem]{Corollary}
\theoremstyle{definition}
\newtheorem{definition}[theorem]{Definition}
\newtheorem{example}[theorem]{Example}
\newtheorem{question}[theorem]{Question}

\numberwithin{equation}{section}

\usepackage{enumitem}
\DeclareMathOperator{\cl}{cl}
\DeclareMathOperator{\CL}{CL}
\begin{document}

\title[PSEUDOCOMPACTNESS OF HYPERSPACES OF SUBSETS OF $\beta \omega$]{SMALL CARDINALS AND THE PSEUDOCOMPACTNESS OF HYPERSPACES OF SUBSPACES OF \MakeLowercase{$\beta \omega$}}


\author{ORTIZ-CASTILLO, Y. F.}
\address{Instituto de Matem\'atica e Estat\'istica, Universidade de S\~ao Paulo \\ Rua do Mat\~ao, 1010, CEP 05508-090, S\~ao Paulo, Brazil}
\curraddr{}
\email{jazzerfoc@gmail.com}
\thanks{}

\author{RODRIGUES, V. O.}
\address{Instituto de Matem\'atica e Estat\'istica, Universidade de S\~ao Paulo \\ Rua do Mat\~ao, 1010, CEP 05508-090, S\~ao Paulo, Brazil}
\curraddr{}
\email{vinior@ime.usp.br}
\thanks{}
 
\author{TOMITA, A. H.}
\address{Instituto de Matem\'atica e Estat\'istica, Universidade de S\~ao Paulo \\ Rua do Mat\~ao, 1010, CEP 05508-090, S\~ao Paulo, Brazil}
\curraddr{}
\email{tomita@ime.usp.br}
\thanks{}
\subjclass[2010]{Primary 54B20, 03E17, 54D20}
\keywords{hyperspaces, small cardinals, pseudocompactness}
\date{}

\dedicatory{}

\commby{}

\begin{abstract}
We study the relations between a generalization of pseudocompactness, named $(\kappa, M)$-pseudocompactness, the countably compactness of subspaces of $\beta \omega$ and the pseudocompactness of their hyperspaces.  We show, by assuming the existence of $\mathfrak c$-many selective ultrafilters, that there exists a subspace of $\beta \omega$ that is $(\kappa, \omega^*)$-pseudocompact for all $\kappa<\mathfrak c$, but $\CL(X)$ isn't pseudocompact. We prove in ZFC that if $\omega\subseteq X\subseteq \beta\omega$ is such that $X$ is $(\mathfrak c, \omega^*)$-pseudocompact, then $\CL(X)$ is pseudocompact, and we further explore this relation by replacing  $\mathfrak c$ for some small cardinals. We provide an example of a subspace of $\beta \omega$ for which all powers below $\mathfrak h$ are countably compact whose hyperspace is not pseudocompact, we show that if $\omega \subseteq X$, the pseudocompactness of $\CL(X)$ implies that $X$ is $(\kappa, \omega^*)$-pseudocompact for all $\kappa<\mathfrak h$, and provide an example of such an $X$ that is not $(\mathfrak b, \omega^*)$-pseudocompact.
\end{abstract}

\maketitle

\section{Introduction}

The letters $\kappa$, $\lambda$, $\mu$ and $\theta$ represent cardinal numbers, the letters $\alpha$, $\beta$, $\gamma$ and similar represent ordinal numbers, and $\operatorname{cf} (\alpha)$ denote the cofinality of the ordinal $\alpha$. With $\omega$ we denote the first infinite cardinal, $\omega_{1}$ is the first non-countable cardinal and $\mathfrak c$ represent the continuum.  Given a set $X$ and a cardinal number $\kappa$, $[X]^{\kappa}$ represent the family $\{A\subseteq X: |A|=\kappa\}$. We define the cardinals $\mathfrak b, \mathfrak p, \mathfrak t, \mathfrak h$ as in \cite{CombinatorialBlass}. Let $A, B\subseteq \omega$, we say that $A\subseteq^*B$ if $A\setminus B$ is finite. If $\mathcal A\subseteq [\omega]^\omega$, a pseudo-intersection of $\mathcal A$ is a set $B \in [\omega]^{\omega}$ such that for all $A \in \mathcal A$, $B\subseteq^*A$. We say that $\mathcal A\subseteq [\omega]^{\omega}$ has the Strong Finite Intersection Property (SFIP) if the intersection of any finite nonempty subset of $\mathcal A$ is infinite. The pseudo-intersection number $\mathfrak p$ is the smallest cardinality of a family with SFIP but with no pseudo-intersection. Given a cardinal $\kappa$, a $\subseteq^*$ $\kappa$-tower is a family $\{A_\alpha: \alpha<\kappa\}$ of infinite subsets of $\omega$ such that whenever $\alpha<\beta<\kappa$, $A_\alpha\subseteq^*A_\beta$. The tower number $\mathfrak t$ is the smallest cardinal of a $\subseteq^*$ $\kappa$-tower without a pseudo-intersection. Both $\mathfrak p, \mathfrak t$ are regular, $\omega_1\leq p\leq t\leq \mathfrak c$ and M. Malliaris and S. Shelah have recently proved that they are, in fact, the same \cite{PisT}. We say that $\mathcal A\subseteq[\omega]^\omega$ is open dense if for every $X \in [\omega]^\omega$ there exists $A \in \mathcal A$ such that $X\subseteq A$ and if for every $A, B \in [\omega]^\omega$ if $A\in \mathcal A$ and $B \subseteq^*A$ then $B \in \mathcal A$. So, $\mathfrak h$ is the smallest cardinal for which there is a collection of $\mathfrak h$ open dense sets whose intersection is empty. Finally, given $f, g \in\,^\omega \omega$, we say that $f<^*g$ if there exists $m \in \omega$ such that for every $n \geq m$, $f(n)<g(m)$. We say that $\mathcal B \subseteq\, ^\omega \omega$ is bounded if there exists $g\in ^\omega \omega$ such that for every $f \in \mathcal B$, $f<^*g$. If $\mathcal B$ is not bounded, we say that $\mathcal B$ is unbounded and then $\mathfrak b$ is the smallest cardinality of an unbounded family.  It is true that $\omega_1\leq \mathfrak t\leq \mathfrak h\leq \mathfrak b\leq \mathfrak c$ and it is consistent that each inequality is strict (see \cite{CombinatorialBlass}). A family $\mathcal A\subseteq [\omega]^\omega$ is almost disjoint if $\mathcal A$ is infinite and for every two distinct $A, B\in \mathcal A$, $A\cap B$ is finite. A mad family is a maximal almost disjoint family in the $\subseteq$ sense. Given an almost disjoint family $\mathcal A$, the Mr\'{o}wka-Isbell space of $\mathcal A$ is $\Psi(\mathcal A)=\mathcal A\cup \omega$, where $\omega$ is open and discrete, and for every $A \in \mathcal A$, $\{\{A\}\cup (A\setminus n): n \in \omega\}$ is a local basis of the point $A$. Recall that $\omega$ is dense in $\Psi(\mathcal A)$,  $\Psi(\mathcal A)$ is Tychonoff, locally compact and zero-dimensional . The  \emph{Stone-\v{C}ech} compactification $\beta \omega$  of the discrete countable space $\omega$  will be identified with the set of all ultrafilters on $\omega$ and its remainder $ \omega^*$ will be identified with the set of all free ultrafilters on $\omega$.

\smallskip

Given a topological space $X$, the (Vietoris) Hyperspace of $X$ consists of the set $\CL(X)$ of all nonempty closed subsets of $X$ equipped with the topology generated by the sets of the form $A^+=\{F \in \CL(X): F\subseteq A\}$ and $A^-=\{F \in \CL(X): F\cap A\neq \emptyset\}$, where $A\subseteq X$ is open. A basic  open set of the Vietoris topology is of the form
$
\left\langle U_1,\ldots,U_n\right\rangle=  \big(\bigcup_{i \leq n}U_{k}\big)^{+} \cap \big(\bigcap_{i \leq n}U_{i-}\big),
$
where $U_{1}$,\dots,$U_{n}$ are nonempty open subsets of $X$. $\CL(X)$ is $T_1$ if, and only if $X$ is Hausdorff, $\CL(X)$ is Hausdorff if, and only if $X$ is regular, and $X$ is compact if, and only if $\CL(X)$ is compact (see \cite{Michael1951}), so it is natural to ask whether there are similar results regarding weaker compactness-like properties.

We say that $X$ is pseudocompact if  every continuous function from $X$ into $\mathbb R$ is bounded\footnote{For Tychonoff spaces, this statement is equivalent to the claim that there is no infinite discrete family of open sets. Many hyperspaces are not Tychonoff, however, this equivalence still holds for hyperspaces (\cite{Ginsburg75})}, also recall that $\Psi(\mathcal A)^\omega$ is pseudocompact if, and only if $\mathcal A$ is a mad family. Following the paper \cite{Ber}, given a space $X$, $p\in\mathbb{N}^{*}$ and a sequence $(S_{n})_{n\in \mathbb{N}} $ of nonempty subsets of $X$, we say that  $x\in X$ is a  $p$-limit  of $(S_{n})_{n\in \mathbb{N}}$, in symbols $x = p-\lim \ S_n$, if  $\{n\in \mathbb{N}: S_{n}\cap W\neq\emptyset\}\in p$ for each neighborhood $W$ of $x$. In particular $p$-limits of sequences of points on Hausdorff spaces (defined as $p$-limits of singular sets), are unique when exist.
Those notions used and not defined in this article have the meaning given to them in \cite{En} and \cite{kunen1980set}.

\medskip

In  \cite{Ginsburg75}, J. Ginsburg has raised the question whether there is a relationship between the pseudocompactness of $X^\omega$ and  $\CL(X)$. He also proved that if every power of a space $X$ is countably compact then so is $\CL(X)$, and that if $\CL(X)$ is countably compact (pseudocompact) then so is every finite power of $X$.  Michael Hru\v{s}\'{a}k, Fernando Hern\'{a}ndez-Hern\'{a}ndez and Iv\'{a}n Mart\'{i}nez-Ruiz proved in \cite{PseudoHyper} that under $\mathfrak h<\mathfrak c$, there exists a mad family $\mathcal A$ such that $\CL(\Psi(\mathcal A))$ is not pseudocompact, and that under $\mathfrak p=\mathfrak c$, for every mad family $\mathcal A$, $\CL(\Psi(\mathcal A))$ is pseudocompact. They also constructed, in ZFC, a subspace of $\beta \omega$ whose hyperspace is not compact but its $\omega$-th power is pseudocompact, showing that under the axioms of ZFC, it is not true that the pseudocompactness of $X^\omega$ implies the pseudocompactness of $\CL(X)$. Also J. Cao, T. Nogura, A. H. Tomita proved in \cite{tomita2004}  that if $X$ is a homogeneous Tychonoff space such that $\CL(X)$ is pseudocompact then $X^\omega$ is pseudocompact.

So, it is natural to ask whether the countable compactness of the former implies the pseudocompactness of the latter. In this article, we explore this question.

Following \cite{NYAS:NYAS22}, if $(B_n:\, n\in \omega)$ is a sequence of subsets of a space $X$ and $p \in \omega^*$, then we will say that the point $x \in X$ is a $p$-limit  of $(B_n:\, n\in \omega)$  if for every neighborhood $V$ of $x$, $\{n \in \omega: V\cap B_n\neq \emptyset\} \in p$.  If $\kappa$ is a cardinal and $M\subseteq \omega^*$, we say that a space $X$ is $(\kappa, M)-$pseudocompact if and only if for every family $\{(V^\alpha_n: n \in \omega):\alpha<\kappa\}$ of sequences of open subsets of $X$ there exists $p \in M$ such that for every $\alpha<\kappa$ there is a $p$-limit point of $V^\alpha_n$ in $x$. We say that $Y\subseteq X$ is relatively countably compact in $X$ if every $A \in [Y]^\omega$ there is an accumulation point of $Y$ in $X$. Notice that if $D\subseteq X$ is an open discrete dense subset of $X$ and $\kappa$ is a cardinal, then $D^\kappa$ is relatively countably compact in $X$ if and only if $X$ is $(\kappa, \omega^*)-$pseudocompact. the same article proves that there $X$ is $(\omega, \omega^*)$-pseudocompact if, and only if $X^\omega$ is pseudocompact. It is well known that if $X^\omega$ is pseudocompact then every power of $X$ is pseudocompact, however, it is not true that $X$ is $(\omega, \omega^*)$-pseudocompact then every power of $X$ is pseudocompact.

We will construct, in ZFC, a subspace $X$ of $\beta \omega$ such that $X^\kappa$ is countably compact for every $\kappa<\mathfrak h$, but $\CL(X)$ is not pseudocompact, explore the question whether this is possible for $\kappa=\mathfrak t$ and build a machinery for trying to answer such questions. We show that if $\omega \subseteq X$, the pseudocompactness of $\CL(X)$ implies that both $X$ and $\CL(X)$ are $(\kappa, \omega^*)$-pseudocompact for all $\kappa<\mathfrak h$, and we provide an example of such an $X$ that is not $(\mathfrak b, \omega^*)$-pseudocompact.

We also develop a different proof of the fact that under $\mathfrak p=\mathfrak c$, every Hyperspace of Mr\'{o}wka-Isbell space of a mad family is pseudocompact.

\section{Countably compactness of products of $X$ not implying that $\CL(X)$ is pseudocompact}

We begin this section proving a  lemma that will be required for the main theorems.

\begin{lemma}\label{ContZp}
Suppose $\omega\subseteq X\subseteq \beta \omega$ and let $p$ be a free ultrafilter over $\omega$. Let $(C_n: n \in \omega)$ be a sequence of finite pairwise disjoint  nonempty subsets of $\omega$, let $\mathcal G=\{g \in \omega^\omega: g(n) \in C_n\}$ and let $Z_p=\{p-\lim g:  g \in \mathcal G\}$. Then the following statements are equivalent:
\begin{enumerate}
 \item $(C_n:\, n\in \omega)$  has  $p$-limit in $\operatorname{CL}(X)$,
 
 \item $\operatorname{cl_X} Z_p=p-\lim \ C_n$, and
 
 \item $Z_p\subseteq X$.
 \end{enumerate}
In addition, if  $|C_n|\leq|C_{n+1}|$ for every $n\in \omega$ and  $|C_n|\rightarrow \infty$, then $|Z_p|=\mathfrak c$.
\end{lemma}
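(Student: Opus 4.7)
The plan is to prove (3) $\Rightarrow$ (1), from which (2) will follow by Hausdorffness of $\CL(X)$ (since $X \subseteq \beta\omega$ is regular, $p$-limits in $\CL(X)$ are unique), and then to close the loop with (1) $\Rightarrow$ (3) by a careful choice of Vietoris neighborhood of $F$ built from the trace set $S := g(\omega)$ of a selector $g \in \mathcal G$. Throughout I would exploit that $\beta\omega$ is zero-dimensional, so every open set in $X$ is a union of sets of the form $\{y \in \beta\omega : B \in y\} \cap X$ for $B \subseteq \omega$, and for $c \in \omega$ the condition $c \in \{y : B \in y\}$ reduces to $c \in B$. Note also that every $g \in \mathcal G$ is automatically injective (the $C_n$ are pairwise disjoint), so $C_n \cap g(\omega) = \{g(n)\}$.

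For (3) $\Rightarrow$ (1), put $F := \cl_X Z_p$, nonempty and closed in $X$, and verify the $p$-limit condition on a basic open set $\langle U_1, \dots, U_k\rangle$ containing $F$ in two parts: (a) $\{n : C_n \subseteq \bigcup_i U_i\} \in p$, and (b) $\{n : C_n \cap U_i \neq \emptyset\} \in p$ for each $i$. For (a), suppose the complement is in $p$; picking $g(n) \in C_n \setminus \bigcup_i U_i$ on that $p$-large set gives $x := p\text{-}\lim g \in Z_p \subseteq F \subseteq \bigcup_i U_i$, so a basic clopen $\{y : B \in y\}$ with $B \in x$ can be chosen inside $\bigcup_i U_i$, and then $\{n : g(n) \in B\} \in p$ contradicts $g(n) \notin \bigcup_i U_i$. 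For (b), the density of $Z_p$ in $F$ yields some $z = p\text{-}\lim g_0 \in Z_p \cap U_i$; shrinking $U_i$ around $z$ to a basic clopen of the form $\{y : B \in y\} \cap X$ transfers $\{n : g_0(n) \in B\} \in p$ directly to $\{n : C_n \cap U_i \neq \emptyset\} \in p$.

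For (1) $\Rightarrow$ (3), take any $g \in \mathcal G$, set $x := p\text{-}\lim g$ and $S := g(\omega)$, and suppose for contradiction $x \notin X$. I first claim no $y \in F$ can have $S \in y$: if some such $y$ existed, then for every $B \in y$ the basic neighborhood $\langle X,\ \{z \in X : B \in z \wedge S \in z\}\rangle$ of $F$ would force, using $C_n \cap S = \{g(n)\}$, that $\{n : g(n) \in B\} = \{n : C_n \cap B \cap S \neq \emptyset\} \in p$, i.e.\ $B \in x$; this would give $y \subseteq x$, hence $y = x$, contradicting $y \in X$ and $x \notin X$. Thus $F$ lies inside the basic open $V := \{z \in X : \omega \setminus S \in z\}$ of $X$, and the Vietoris neighborhood $V^+$ of $F$ then demands $\{n : C_n \subseteq V\} = \{n : C_n \cap S = \emptyset\} \in p$, which is empty since $g(n) \in C_n \cap S$ for every $n$.

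For the cardinality addendum, using $|C_n| \le |C_{n+1}|$ and $|C_n| \to \infty$ I would construct a family $\{g_r : r \in 2^\omega\} \subseteq \mathcal G$ of pairwise eventually different selectors: at each $n$ with $|C_n|$ large enough, encode an initial segment of $r$ as an index inside $C_n$, so that $\{n : g_r(n) = g_s(n)\}$ is finite for $r \neq s$. The trace $g_r(\omega)$ then separates the $p$-limits: $\{n : g_r(n) \in g_r(\omega)\} = \omega \in p$ while $\{n : g_s(n) \in g_r(\omega)\} = \{n : g_s(n) = g_r(n)\}$ is finite and not in $p$, so $p\text{-}\lim g_r \neq p\text{-}\lim g_s$ and $|Z_p| \geq \mathfrak c$; the reverse inequality is immediate from $|\mathcal G| \le \mathfrak c$. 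The main obstacle is the (1) $\Rightarrow$ (3) step: the crux is to combine the trace $S$ and an arbitrary $B \in y$ into a single basic Vietoris neighborhood of $F$, which is exactly what lets one transfer $p$-largeness at the $C_n$ level back down to the selector; without this trick one is only able to conclude $Z_p \subseteq \cl_{\beta\omega} F$.
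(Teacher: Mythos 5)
Your proof is correct, and two of its three components essentially coincide with the paper's: the direction $(3)\Rightarrow(1)$ is verified there in the same way (checking the $+$ part by a selector avoiding $U_0\cup\dots\cup U_k$ on a $p$-large set, and the $-$ parts by witnesses $z_i\in U_i\cap Z_p$), and the cardinality addendum is also obtained from a binary encoding into the $C_n$'s producing an almost disjoint family of traces $g_\sigma[\omega]$ that separate the $p$-limits. Where you genuinely diverge is in $(1)\Rightarrow(3)$ and in how $(2)$ is reached. The paper first proves that $Z_p$ is discrete in $\beta\omega$ (separating the $p$-limits of two selectors by the clopen set $\cl_{\beta\omega}g[\omega]$) and then establishes two claims, $F\subseteq\cl_{\beta\omega}Z_p$ and $Z_p\subseteq F$, from which $(2)$, and hence $(3)$, follow directly; the proof of $Z_p\subseteq F$ separates a putative $z\in Z_p\setminus F$ from $\cl_{\beta\omega}(Z_p\setminus\{z\})$ by clopen sets and contradicts $F\in V^+$ via Claim 1. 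You instead fix a single selector $g$ with $x=p-\lim g\notin X$ and use its trace $S=g[\omega]$ twice: the mixed neighborhood $\langle X,\{z\in X: B\cap S\in z\}\rangle$ shows that any $y\in F$ with $S\in y$ would satisfy $y\subseteq x$, hence $y=x\notin X$, so $F\subseteq\{z\in X:\omega\setminus S\in z\}$, and then the $+$ part of that neighborhood is violated because $g(n)\in C_n\cap S$ for every $n$. This is a clean shortcut: it bypasses the discreteness of $Z_p$ and the two-claim decomposition entirely, at the modest cost of obtaining $(2)$ only indirectly, via uniqueness of $p$-limits in the Hausdorff space $\CL(X)$ applied to the limit $\cl_X Z_p$ produced in $(3)\Rightarrow(1)$, whereas the paper's route delivers the identity $F=\cl_X Z_p$ (and the discreteness of $Z_p$) as explicit by-products of the argument.
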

\begin{proof}
First we will show that $Z_p$ is discrete in $\beta \omega$. Let $z \in Z_p$ and $g \in \mathcal G$ such that $z=p-\lim g$. Then $z$ belongs to the clopen set $W=\cl_{\beta \omega}\{g(n): n \in \omega\}\subseteq \beta \omega$. We claim that $Z_p \cap W=\{z\}$. Pick $h \in \mathcal G$ and observe that, since the sets $\{n \in \omega: g(n)=h(n)\}$ and $ \{n \in \omega: g(n)\neq h(n)\}$ are a partition of $\omega$, then one and only one of those sets belongs to $p$. If $\{n \in \omega: g(n)=h(n)\} \in p$, then $p-\lim h=p-\lim g=z$. Otherwise let $A=\{n \in \omega: g(n)\neq h(n)\} \in p$. Since the sets $C_n$ are pairwise disjoint, it follows that $\{g(n): n \in A\}\cap \{h(n): n \in A\}=\emptyset$, which implies that $\{g(n): n \in \omega\} \cap \{h(n): n \in A\}=\emptyset$. As $\{h(n): n \in A\}\in p-\lim \ h$ we obtain that $p-\lim h\notin W$. Now suppose that the $p$-limit of  the sequence $(C_n:\, n\in \omega)$ exists in $\operatorname{CL}(X)$ and let $F= p-\lim \ C_n$.

\begin{flushleft}
\textbf{Claim 1:} $F\subseteq \cl_{\beta\omega} Z_p$.
\end{flushleft}
{\bf Proof of Claim:}  Given $x\in F$ and $U$ open set of $\beta \omega$ which contains $x$, let $V$ a open set such that $x\in V\subseteq \cl_{\beta \omega}(V)\subseteq U$. Thus  $F\in \langle V, \beta \omega\setminus\{x\} \rangle$. Since $F= p-\lim \ C_n$, then  $B=\{n\in  \omega: C_n\cap V\neq \emptyset\}\in p$. Let $g:\omega \to \omega$ such that $g(n)\in C_n\cap V$ when $n\in B$ and $g(n)\in C_n$ otherwise. Of course $g\in \mathcal G$ and $p-\lim \ g \in \cl_{\beta \omega}(V)$. So $p-\lim \ g \in U \cap Z_p$, which implies that $z\in \cl_{\beta\omega} Z_p$. Then  $F\subseteq \cl_{\beta\omega} Z_p$. 

\begin{flushleft}
\textbf{Claim 2:} $Z_p\subseteq F$.
\end{flushleft}
{\bf Proof of Claim:}  Suppose by contradiction that $z \in Z_p \setminus F$ and let $g \in \mathcal G$ be such that $z=p-\lim g$. Since $Z_p$ is discrete in $\beta \omega$, we have that $z \notin \cl_{\beta \omega}(Z_p\setminus \{z\})$. So there are two disjoint clopen sets $V$ and $W$ such that $\cl(Z_p\setminus \{z\})\subseteq V$ and $z\in W$. Thus 
$$
\{n \in \omega: C_n\in V^+ \}=\{n \in \omega: C_n\subseteq V \}\subseteq \{n \in \omega: g(n)\notin  W \}\notin p
$$
but this is not possible  because $F\in V^+$ by Claim 1.

\medskip

Now  note that $(1) \Rightarrow (2)$ follows from Claims 1 and 2; and $(2) \Rightarrow (3)$ is evident.
To show  $(3) \Rightarrow (1)$ take a basic neighborhood  $\langle U_0, \ldots ,U_{k}\rangle$ of $\cl_X Z_p$ where $U_i$ is a clopen set of $X$ for each $i \leq k$. Pick $z_i \in U_i \cap Z_p$  and fix $g_i \in \mathcal G$ that witnesses that  $z_i \in Z_p$ for each $i \leq k$.  It follows that 
$$
\{ n \in \omega :\, C_n \cap U_i \neq \emptyset \} \supseteq \{n\in \omega:\, g_i(n) \in U_i \}\in p
$$ 
for each $i \leq k$. Also observe  that, if $A=\{ n \in \omega:\, C_n \not\subseteq  U_0\cup \ldots \cup U_{k} \}\in p$, then there is  $ g\in \mathcal G$ such that $g(n) \in C_n \setminus (U_0 \cup \ldots \cup U_{k})$ for each $n\in A$.  So the $p$-limit of $g$ does not belong to $(U_0 \cup \ldots \cup U_{k} )$ which implies that $\cl_X Z_p \setminus (U_0 \cup \ldots \cup U_{k} )\neq \emptyset$, a contradicion. So we obtain that $\{ n \in \omega:\, C_n \in \langle U_0, \ldots U_{k-1}\rangle \}\in p$ because $p$ is an ultrafilter. Thus $\cl_X Z_p$ is the $p$-limit of $(C_n:\, n \in \omega)$.

\medskip

Finally assume that $|C_n|\leq|C_{n+1}|$ for every $n\in \omega$ and  $|C_n|\rightarrow \infty$. Then there is a strictly  increasing sequence of natural numbers $(k_m: m\in \omega)$ such that  $k_0=0$ and  $|C_{k_m}|\geq 2^m$ for every $m\in \omega$. Let $s(n)$ be the unique natural number $m$ such that  $k_m\leq n<k_{m+1}$ for each $n \in \omega$ ($s(n)$ is well defined because $(k_m: m\in \omega)$ strictly  increasing).  Observe  that $|C_{n}|\geq2^{s(n)}$ and $s(k_n)=n$ for every $n \in \omega$. Let $\{e_n(0), \dots, e_n(2^{s(n)}-1)\}$ be a subset of $C_n$ of cardinality $2^{s(n)}$ for every $n \in \omega$. Also, for a given $\sigma \in 2^\omega$, let $f_\sigma:\omega \to \omega$ be the fundtion defined by $f_\sigma(n)=\sum_{i<s(n)}2^i\sigma(i)$. Hence $\{f_\sigma[\omega]: \sigma \in 2^\omega\}$ is an AD family of cardinality $\mathfrak c$ of $\omega$. Finally, let  define the function $g_\sigma:\omega \to \omega$ by $g_\sigma(n)=e_n(f_\sigma(n))$ for every $n \in \omega$ and every $\sigma \in 2^\omega$. Observe that $g_\sigma\in\mathcal G$ for every $\sigma \in 2^\omega$, furthermore, since the $C_{n}$ are disjoint, we have that $g_\sigma(n)=g_{\sigma'}(m)$ iff $n=m$ and $f_\sigma(n)=f_{\sigma'}(n)$, so, the family $\{g_\sigma[\omega]: \sigma \in 2^\omega\}$ is also an AD family of cardinality $\mathfrak c$ of $\omega$. Thus $\{\cl_{\beta \omega}(g_\sigma[\omega])\setminus \omega: \sigma \in 2^\omega\}$ is a pairwise disjoint family of closed sets. Clearly $z_\sigma=p-\lim \ g_\sigma \in (\cl_{\beta \omega} g_\sigma[\omega])\setminus \omega$ for every $\sigma \in 2^\omega$ so, $|\{z_\sigma: \sigma \in 2^\omega\}|=\mathfrak c$. Since $\{z_\sigma: \sigma \in 2^\omega\}\subseteq Z_p$ we obtain that $|Z_p|=\mathfrak c$.
\end{proof}


Now we will establish conditions for uncountable cardinal numbers $\mu \leq \mathfrak c$ under which there exist subspaces of $\beta \omega$  with  countably compact $< \mu$-powers but such that  its Vietoris hyperspace is not even pseudocompact.

\begin{theorem}\label{smallbig} 
Let  $\mu$  and $\lambda$ be two uncountable cardinals such that:

\begin{enumerate}[label=\alph*)]
	\item $\omega_1\leq \mu\leq  \mathfrak c\leq\lambda\leq 2^{\mathfrak c}$, $\lambda^{<\mu}\leq \lambda$, $\operatorname{cf}(\lambda)\geq \mu$, and
    \item for every infinite cardinal $\kappa<\mu$ and every $Y\subseteq [\omega^*]^{<\lambda}$,  all the sequences in $(\beta \omega\setminus Y)^\kappa$ have an accumulation point in $(\beta \omega\setminus Y)^\kappa$.
\end{enumerate}
Then there exists  $X\subseteq \beta \omega$ such that $\omega\subseteq X$, $|X|=\lambda$ and $X^{\kappa}$ is countably compact for every $\kappa<\mu$ but $\CL(X)$ is not pseudocompact.
\end{theorem}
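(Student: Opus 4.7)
The plan is to build $X$ via a transfinite recursion of length $\lambda$ so that $\omega\subseteq X$, $|X|=\lambda$, $X^\kappa$ is countably compact for every $\kappa<\mu$, and $Z_p\not\subseteq X$ for every $p\in\omega^*$, where the sets $Z_p$ are associated to a fixed sequence $(C_n)_{n\in\omega}$ of pairwise disjoint nonempty finite subsets of $\omega$ with $|C_n|\leq|C_{n+1}|$ and $|C_n|\to\infty$. By Lemma~\ref{ContZp}, each $Z_p$ has cardinality $\mathfrak c$, and the pairwise disjoint nonempty open sets $\langle C_n\rangle$ in $\CL(X)$ have an accumulation point precisely when some $Z_p\subseteq X$; so arranging $Z_p\not\subseteq X$ for every $p$ delivers the non-pseudocompactness of $\CL(X)$.

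Using $\lambda^{<\mu}\leq\lambda$ and $\operatorname{cf}(\lambda)\geq\mu$, I enumerate in order type $\lambda$ all candidate sequences $\omega\to(\beta\omega)^\kappa$ with $\kappa<\mu$ and use standard bookkeeping so every eventual sequence in $X^\kappa$ is processed once its range lies in the current approximation. Start with $X_0=\omega$ and $Y_0=\emptyset$, maintaining the invariants that $X_\alpha\cap Y_\alpha=\emptyset$, both $|X_\alpha|<\lambda$ and $|Y_\alpha|<\lambda$, and that for every $p\in\omega^*$ with $Z_p\cap X_\alpha\neq\emptyset$ a committed witness $z_p\in Z_p\cap Y_\alpha$ has been fixed. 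At a successor stage processing $\sigma_\alpha\colon\omega\to X_\alpha^\kappa$, first enlarge $Y_\alpha$ by committing $z_p\in Z_p\setminus X_\alpha$ for every $p$ which became bad at the previous step (possible since only $\kappa<\mathfrak c$ new points entered $X$ then, so $|Z_p\setminus X_\alpha|=\mathfrak c$); then condition (b) applied with the enlarged $Y_{\alpha+1}$ supplies an accumulation point $x_\alpha\in(\beta\omega\setminus Y_{\alpha+1})^\kappa$ of $\sigma_\alpha$, and I set $X_{\alpha+1}=X_\alpha\cup\{(x_\alpha)_\xi:\xi<\kappa\}$. At limit stages take unions, and pad at the end to secure $|X|=\lambda$.

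Verification is immediate: $X^\kappa$ is countably compact by the bookkeeping, and for each $p\in\omega^*$ either $Z_p\cap X=\emptyset$ or the committed $z_p$ remains in $Y$ and hence outside $X$, so $Z_p\not\subseteq X$; thus $\CL(X)$ is not pseudocompact. The main obstacle is controlling $|Y_\alpha|<\lambda$ throughout. Writing $C=\bigcup_n C_n$, each $x\in\omega^*\cap\cl C$ lies in at most $|\mathcal G|=\mathfrak c$ of the sets $Z_p$ (because $x\in Z_p$ forces $x=g^*(p)$ for some $g\in\mathcal G$, which determines $p$), so the bad-$p$ count at stage $\alpha$ is at most $|X_\alpha\cap\cl C|\cdot\mathfrak c$. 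Keeping this strictly below $\lambda$ combines $\mathfrak c\leq\lambda$ with the cofinality and cardinal-arithmetic hypotheses on $\lambda$; in the tight case $\lambda=\mathfrak c$ one must additionally choose accumulation points so their $\cl C$-components land in ultrafilters belonging to no $Z_p$, which is feasible because the filter on $\omega$ generated by $C$ together with $\{\omega\setminus g[\omega]:g\in\mathcal G\}$ has the FIP thanks to $|C_n|\to\infty$ and hence extends to free ultrafilters.
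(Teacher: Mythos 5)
Your proposal follows the same blueprint as the paper's proof: fix the sequence $(C_n)$, use Lemma~\ref{ContZp} to reduce the failure of pseudocompactness of $\CL(X)$ to ``$Z_p\not\subseteq X$ for every $p\in\omega^*$'', and run a length-$\lambda$ recursion in which accumulation points supplied by hypothesis (b) are put into $X$ while committed witnesses $z_p\in Z_p$ are placed in a forbidden set $Y$ of size $<\lambda$. Your accounting is fine when $\mathfrak c<\lambda\leq 2^{\mathfrak c}$. The gap is in the case $\lambda=\mathfrak c$, which is precisely the case needed for Examples~\ref{t1} and~\ref{exampleT}. You correctly notice that committing a witness for \emph{every} $p$ with $Z_p\cap X_\alpha\neq\emptyset$ can require $\mathfrak c=\lambda$ commitments already at the first nontrivial stage (a single new point of $X\cap\cl_{\beta\omega}C$ may lie in $\cl_{\beta\omega}g[\omega]$ for $\mathfrak c$ many $g\in\mathcal G$, each contributing a distinct bad $p$), so $|Y|$ escapes the bound $<\lambda$ that hypothesis (b) needs. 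But your proposed repair --- choosing the accumulation points so that no coordinate lies in any $Z_p$ --- cannot work. Hypothesis (b) only lets you avoid a prescribed set of size $<\lambda$, and $\bigcup_{p}Z_p=\bigcup_{g\in\mathcal G}(\cl_{\beta\omega}g[\omega]\setminus\omega)$ has cardinality $2^{\mathfrak c}$; the existence of ultrafilters in $\cl_{\beta\omega}C$ avoiding every $g[\omega]$ does not let you steer the accumulation point into that set. Worse, the requirement contradicts what you are building: any $g\in\mathcal G$ is itself a sequence in $\omega\subseteq X$, every cluster point $x\in X$ of it is a $p$-limit of $g$ for some $p\in\omega^*$, and hence $x\in Z_p$; so countable compactness of $X$ forces $X\cap\bigcup_pZ_p\neq\emptyset$.

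The paper resolves exactly this counting problem with a deferral device that your scheme is missing: enumerate $\mathcal G=\{g_\beta:\beta<\mathfrak c\}$ and at stage $\alpha$ neutralize only those $p$ for which some $g_\beta$ with $\beta<\alpha$ satisfies $p-\lim g_\beta\in\bigcup_{\gamma\leq\alpha}X_\gamma$; this keeps $|P_\alpha|\leq|\alpha|\cdot|\epsilon_\alpha|<\mathfrak c$ at every stage, and in the final verification a pair $(\alpha,\beta)$ witnessing $Z_p\cap X\neq\emptyset$ is caught at stage $\max\{\alpha,\beta\}+1$, where a point of $Z_p$ has been placed in $Y_{\max\{\alpha,\beta\}+1}$ and hence outside $X$. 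A smaller issue: you cannot literally enumerate all sequences $\omega\to(\beta\omega)^\kappa$ in order type $\lambda$ when $\lambda<2^{\mathfrak c}$; the bookkeeping must run, as in the paper, over sequences of ordinal indices below $\lambda$ (this is what $\lambda^{<\mu}\leq\lambda$ and $\operatorname{cf}(\lambda)\geq\mu$ are for). Your clause ``once its range lies in the current approximation'' suggests you intend this, so I read it as a slip of wording rather than a second gap.
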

\begin{proof}
Let $(C_n: n \in \omega)$ be a sequence of nonempty pairwise disjoint finite subsets of $\omega$ such that $|C_{n+1}|\geq |C_n|$ for every $n \in \omega$ and such that $|C_n|\rightarrow \infty$. Let $\mathcal G=\{g \in \omega^\omega: g(n) \in C_n\}$, enumerating it as $\mathcal G=\{g_\alpha: \alpha<\mathfrak c\}$, and, for every $\alpha<\mathfrak c$, let $\mathcal G_\alpha=\{g_\beta: \beta<\alpha\}$. We enumerate $\mathcal F=\bigcup\{(\lambda^{\mathfrak \kappa})^\omega:\kappa \text{ is an infinite cardinal and } \kappa<\mu\}=\{f_\alpha: \alpha<\lambda\}$ such as for every $f \in \mathcal F$, $|\{\alpha<\mathfrak \lambda: f_\alpha=f\}|=\lambda$. This is possible since $\lambda^{\kappa}\leq\lambda$ for every infinite cardinal $\kappa< \mu$ and because there are at most $\mu\leq \lambda$ cardinals below $\mu$. For each $\alpha<\lambda$, let $\kappa_\alpha=\operatorname{dom}
f_\alpha(n)$ ($\kappa_\alpha$ doesn't depend on $n$)  and $\zeta_\alpha=\{f_\alpha(n)(\beta)+1: n \in \omega, \beta<\kappa_\alpha\}$. Notice that $\zeta_\alpha<\lambda$ since by hypothesis, $\operatorname{cf} (\lambda)\geq \mu>\mathfrak \kappa_\alpha, \omega$.

Recursively  we will define, for every $\alpha<\lambda$, ordinals $\delta_\alpha$ and $\epsilon_\alpha$,  sets $X_\alpha=\{x_\xi: \delta_\alpha\leq\xi<\epsilon_\alpha\}\subseteq \beta\omega$ and $Y_\alpha\subseteq \beta \omega$, sequences $\hat f_\alpha:\omega \to \left(\bigcup_{\beta< \alpha} X_\beta\right)^{\kappa_\alpha}$, and collections of free ultrafilters $P_\alpha$ satisfying:

\begin{enumerate}
	\item $\delta_0=0$, $\epsilon_0=\omega$ and $x_n=n$ for every $n<\omega$,
	
	\item $\delta_\alpha=\sup\{\epsilon_\beta: \beta<\alpha\}$ for every $\alpha<\lambda$, 
	
	\item the sequence $(\epsilon_\alpha: \alpha<\lambda)$ is strictly increasing,
	
	\item $0<|\epsilon_\alpha\setminus \delta_\alpha|\leq\mathfrak \kappa_\alpha$ for every $\alpha<\lambda$,
	
	\item $x_\xi\neq x_{\xi'}$ whenever $\xi\neq \xi'$,
	
	\item $(\bigcup_{\beta\leq\alpha}X_\alpha)\cap(\bigcup_{\beta\leq \alpha} Y_\alpha)=\emptyset$ for every $\alpha<\lambda$,
	
	\item $\hat f_\alpha(n)(\xi)=x_{f_\alpha(n)(\xi)}$ for every $\xi < \kappa_\alpha$,  $n < \omega$ and $\alpha<\lambda$ when $\zeta_\alpha<\sup\{\epsilon_\beta:\beta<\alpha\}$,

\item $\hat f_\alpha$ has an accumulation
 point in $\left(\bigcup_{\beta\leq \alpha} X_\beta\right)^{\kappa_\alpha}$
 when $\zeta_\alpha<\sup\{\epsilon_\beta:\beta<\alpha\}$,

\end{enumerate}Also, if $\lambda=\mathfrak c$:	
\begin{enumerate}
		  \setcounter{enumi}{8}
	\item $|Y_\alpha|\leq |\alpha|.|\epsilon_\alpha|$ for every $\alpha<\mathfrak c$, 

	\item $P_\alpha=\{p \in \omega^*: \exists g \in \mathcal G_\alpha\,(p-\lim g \in \bigcup_{\beta \leq \alpha} X_\alpha)\}$ for every $\alpha<\mathfrak c$, and
	
	\item $\forall \alpha< \mathfrak c \, \forall p \in P_\alpha \exists y \in Y_\alpha \exists h \in \mathcal G\, (p-\lim h=y)$.
\end{enumerate}
Else, if $\mathfrak c<\lambda\leq 2^\mathfrak c$:
\begin{enumerate}[label=(\arabic*)']
	\setcounter{enumi}{8}
	\item $|Y_\alpha|\leq \mathfrak c$ for every $\alpha<\lambda$, 
	\item $P_\alpha=\{p \in \omega^*:p \notin \bigcup_{\beta<\alpha}P_\beta\,\wedge\, \exists g \in \mathcal G\,(p-\lim g \in X_\alpha)\}$ for every $\alpha<\mathfrak c$, and
	
	\item $\forall \alpha< \mathfrak \lambda  \forall p \in P_\alpha \exists y \in Y_\alpha \exists h \in \mathcal G\, (p-\lim h=y)$.
\end{enumerate}

Clearly $\delta_0$, $\epsilon_0$ and $X_0$ are given by 1 and so $P_0=\emptyset$. Hence by taking $Y_0=\emptyset$ all the conditions hold for $\alpha=0$. Let $\alpha<\lambda$ and suppose that we have defined the $\epsilon_\beta, \delta_\beta$,  $X_\beta$, $Y_\beta$ and  $P_\beta$ for every $\beta<\alpha$. Let $\delta_\alpha$ and $\hat f_\alpha$ be defined by 2 and 7 respectively (if $\zeta_\alpha\geq\sup\{\epsilon_\beta:\beta<\alpha\}$, let $\hat f_\alpha$ be any sequence). Notice that  items 1-4 imply that 
$
\dot{\bigcup}_{\beta<\alpha}[\delta_\beta, \epsilon_\beta)=\sup\{\epsilon_\beta: \beta<\alpha\}<\lambda
$ 
because, if $\mu<\lambda$, then 
$$
\left|\sup\{\epsilon_\beta: \beta<\alpha\}\right|\leq \sum_{\beta<\alpha}\kappa_\alpha\leq\sum_{\beta<\alpha}\mu=\mu |\alpha|<\lambda;
$$
and otherwise, if $\mu=\lambda$ then the equality $\mu=\lambda=\mathfrak c$ holds, so $\mathfrak c\geq \operatorname{cf}(\mathfrak c)=\operatorname{cf}(\lambda)\geq\mu=\mathfrak c$, which implies $\mathfrak c$ is regular. Thus 
$
|\sup\{\epsilon_\beta: \beta<\alpha\}|\leq \sum_{\beta<\alpha}\kappa_\alpha<\mathfrak c
$. 
In any case, $\left|\bigcup_{\beta<\alpha}X_\beta \right| < \lambda$. Notice that, if $\lambda=\mathfrak c$, then by 9, 
$$
\left|\bigcup_{\beta<\alpha}Y_\beta\right|\leq\sum_{\beta<\alpha}|\beta||\epsilon_\beta|\leq |\alpha|\sum_{\beta<\alpha}|\epsilon_\beta|\leq |\alpha|\sup\{\epsilon_\beta: \beta<\alpha\}<\lambda.
$$

and if $\lambda>\mathfrak c$, then $\left|\bigcup_{\beta<\alpha}Y_\beta\right|\leq |\alpha|\mathfrak c<\lambda$. Thus, in any case, $|\bigcup_{\beta<\alpha}(X_\beta \cup Y_\beta)|<\lambda$.

Now we will consider two cases:

\smallskip

{\bf Case I:}  $\zeta_\alpha\geq \sup\{\epsilon_\beta: \beta<\alpha\}$ or the sequence $\hat f_\alpha$ has an accumulation point in $(\bigcup_{\beta<\alpha}X_\beta)^{\kappa_\alpha}$. In this case let $\epsilon_\alpha=\delta_\alpha+1$ and pick $x_{\delta_\alpha}\in \omega^*\setminus \bigcup_{\beta<\alpha}(X_\beta \cup Y_\beta)$ arbitrary. 

\smallskip

{\bf Case II:}
 $\zeta_\alpha<\sup\{\epsilon_\beta: \beta<\alpha\}$ and the sequence $\hat f_\alpha$ has no accumulation point in $(\bigcup_{\beta<\alpha}X_\beta)^{\kappa_\alpha}$. For this case let $Y=\bigcup_{\beta<\alpha}Y_\beta$. Since $|Y|<\lambda$, we have by  hypothesis and items 6  that the sequence $\hat f_\alpha$ has an accumulation point $(u_\delta)_{\delta \in \kappa_\alpha} \in (\beta\omega\setminus Y)^{\kappa_\alpha}$. Set $X_\alpha=\{u_\delta: \delta<\kappa_\alpha\}\setminus \bigcup_{\beta<\alpha}X_\beta$ and observe that  by  assumption of this case $0 < |X_\alpha|\leq \kappa_\alpha$ so, let $\epsilon_\alpha=\delta + |X_\alpha|$ (ordinal sum) and enumerate $X_\alpha$ as $\{x_\xi: \delta_\alpha \leq \xi < \epsilon_\alpha \}$.

\smallskip

From the two cases $\epsilon_\alpha$ and $X_\alpha$  are already defined satisfying  items 3-5 and 8. First, suppose $\lambda=\mathfrak c$. Hence $P_\alpha$ is given by 10, so  $|P_\alpha|\leq |\alpha||\epsilon_\alpha|$. Furthermore, by Lemma \ref{ContZp}, $ |Z_p|=\mathfrak c$, for each $p\in P_\alpha$, so for each such $p$ there is $h_p \in \mathcal G$ such that $p-\lim h_p \notin \bigcup_{\beta\leq \alpha}X_\beta$ (since the latter has cardinality $|\epsilon_{\alpha}|<\lambda=\mathfrak c$).  Then let $Y_\alpha=\{p-\lim h_p: p \in P_\alpha\}$ and observe that items 6, 9 and 11 holds. Now suppose $\mathfrak c<\lambda\leq 2^\mathfrak c$. Let $P_\alpha$ be as in 10', so, by 4, $|P_\alpha|\leq\left|\bigcup_{g \in \mathcal G}\{p \in \omega^*: p-\lim g \in X_\alpha\}\right|\leq \sum_{g \in \mathcal G}|X_\alpha|\leq \sum_{g \in \mathcal G}\kappa_\alpha\leq \mathfrak c$. For every $p \in P_\alpha$, by Lemma \ref{ContZp}, $ |Z_p|=\mathfrak c$, so there is $h_p \in \mathcal G$ such that $p-\lim h_p \notin X_\alpha$. Notice that $p-\lim h_p \notin \bigcup_{\alpha<\beta}X_\alpha$, or else, by letting $\alpha$ be the smallest ordinal such that $p-\lim h_p \in X_\alpha$, it follows that $p \in P_\alpha$, so $p \notin P_\beta$, a contradiction. Then by letting $Y_\alpha=\{p-\lim h_p: p \in P_\alpha\}$ and observe that items 6, , 9' and 11' holds.

\smallskip

Our promised  set will be  $X=\bigcup_{\alpha<\lambda} X_\alpha$.    By 1, $\omega \subseteq X$ and recall that items 1-5 say that $|X|=\lambda$.  Let $\kappa<\mu$. To prove that $X^\kappa$ is countably compact let  $h:\omega\rightarrow X^{\kappa}$. By 7 and the definition of $\mathcal F$ there is $\alpha_0<\lambda$  such that $\hat f_{\alpha_0}=h$.  Even more, the same statements guarantee that there is $\alpha_1<\lambda$ such that $\zeta_{\alpha_0} < \epsilon_{\alpha_1}$ and $f_{\alpha_0}=f_{\alpha_1}= h$. Then, for $\alpha=\alpha_1$ it is true that $\epsilon_\alpha>\zeta_\alpha$, $\hat f_{\alpha}=h$ and by  8, the sequence $h$ has an accumulation point on $X^{\kappa}$. This proves that $X^{\kappa}$ is countably compact.

Now we will show that the sequence $(C_n: n \in \omega)$ has no accumulation point on $\CL(X)$. Suppose $F$ is  an accumulation point for that sequence, so there exists a free ultrafilter $p$ such that $p-\lim C_n=F$. By Lemma \ref{ContZp}, $Z_p\subseteq F\subseteq X$  (where $Z_p=\{p-\lim g: g \in \mathcal G\}$). Let $\alpha$ be the first ordinal such that $Z_p\cap X_\alpha \neq \emptyset$.

In case $\lambda=\mathfrak c$, there exists $\beta<\lambda$ such that $p-\lim g_\beta \in Z_p\cap X_\alpha$. Let $\gamma=\max\{\alpha, \beta\}+1$, then by 10, $p \in P_\gamma$. Therefore $Z_p \cap Y_\gamma \neq \emptyset$, which implies $Z_p \setminus X\neq \emptyset$, a contradiction.

Finally, if $\mathfrak c<\lambda\leq 2^\mathfrak c$, there exists $g \in \mathcal G$ such that $p-\lim g\in Z_p\cap X_\alpha$. By 10', $p \in P_\alpha$, therefore $Z_p \cap Y_\alpha \neq \emptyset$, which implies $Z_p \setminus X\neq \emptyset$, a contradiction.
\end{proof}

Now we concretely explore the existence of spaces whose large products are countably compact whose hyperspace is not pseudocompact.

\begin{example}\label{t1}There exists $X\subseteq \beta \omega$ of cardinality $\mathfrak c$ such that $\omega\subseteq X$ and $X^{\mathfrak \kappa}$ is countably compact for every $\kappa<\mathfrak t$ but $ \CL(X)$ is not pseudocompact.
\end{example}

\begin{example}\label{exampleT}Suppose $\mathfrak t<\mathfrak c$ and $2^\mathfrak t=\mathfrak c$. Then there exists $X\subseteq \beta \omega$ of cardinality $2^{\mathfrak t}$ such that $ \CL(X)$ is not pseudocompact, $\omega\subseteq X$ and $X^{\mathfrak t}$ is countably compact.
\end{example}

\begin{proof}
	The two constructions are very similar, so we will construct both at the same time.
	
	For the first example, We apply Theorem \ref{smallbig} by using $\mu=\mathfrak t$, $\lambda=\mathfrak c$. It is clear that $\omega_1\leq\mathfrak t\leq \mathfrak c\leq \mathfrak c\leq 2^\mathfrak c$, and it is well known (see \cite{CombinatorialBlass}) that for every infinite cardinal $\kappa<\mathfrak t$, $2^\kappa=\mathfrak c$, so $\mathfrak c^{<\mathfrak t}=\mathfrak c$ and, by K\"{o}nig's Lemma, $\operatorname{cf}(\mathfrak c)\geq \mathfrak t$. For the second example, we apply Theorem \ref{smallbig} by letting $\mu=\mathfrak t^+\leq \mathfrak c$, $\lambda=2^\mathfrak t=\mathfrak c$.
	
	 Suppose $\kappa<\mu$, $|Y|\in [\omega^*]^{<\mathfrak c}$ and $F:\omega\rightarrow (\beta \omega\setminus Y)^\kappa$. We must show that $F$ has an accumulation point in $(\beta \omega\setminus Y)^\kappa$. For every $\delta<\kappa$, let $g_\delta=\pi_\delta\circ F$, where $\pi_\delta$ is the projection on the $\delta$-th coordinate.
	
	We recursively define a $\supseteq^*$ tower $(A_\delta: \delta<\kappa)$ such that for every $\delta<\kappa_\alpha$, $g_\delta|A_{\delta+1}$ is either injective or constant, $g_\delta[A_{\delta+1}]$ is a discrete subspace of $\beta \omega$, and that $Y\cap\cl g_\delta[A_{\delta+1}]=\emptyset$. To see we can carry on such a recursion, let $A_0=\omega$. Suppose we have defined $A_\beta$ for every $\beta<\delta$. If $\delta=\beta+1$ for some $\beta$, let $f=g_\beta$. Let $X\subseteq A_\delta$ be such that $f|X$ is either constant or injective and $f[X]$ is discrete. If $f|X$ is constant, let $A_{\delta+1}=X$. Else, let $\mathcal A$ be an almost disjoint family of cardinality $\mathfrak c$ on $X$. Each $y \in Y$ is in at most one element of $\{\cl f[A]: A \in \mathcal A\}$, for if $A, B \in \mathcal A$ are distinct and $y \in \cl f[A]\cap \cl f[B]$, since $y \notin f[A]\cup f[B]$ it follows that $y \in \cl(f[A]\setminus f[B])\cap \cl(f[B]\setminus f[A]),$
	 then, since both sets inside of the closure operator are countable, without loss of generality: $$(f[A]\setminus f[B])\cap \cl(f[B]\setminus f[A])\neq \emptyset,$$ then, since $f[X]$ is discrete, $(f[A]\setminus f[B])\cap (f[B]\setminus f[A])\neq \emptyset,$ a contradiction. Since $|\{\cl f[A]: A \in \mathcal A\}|=\mathfrak c$ and $|Y|<\mathfrak c$, there exists $A \in \mathcal A$ such that $y \notin \cl f[A]$ for every $y \in Y$. Let $A_{\delta+1}$ be such an $A$. Finally, if $\delta$ is limit, simply let $A_\delta$ be a pseudo-intersection of $(A_\beta: \beta<\delta)$. Finally, let $r$ be a free ultrafilter containing $\{A_\delta: \delta<\kappa\}$. Then $r-\lim g_\delta \in \cl g_\delta[A_{\delta+1}]$, so $r-\lim g_\delta \notin Y$. Therefore, $r-\lim F \in (\beta \omega \setminus Y)^\kappa$.
\end{proof}

Recall that by starting with GCH and adding at least $\omega_2$ Cohen reals, we get a model where $\mathfrak t=\omega_1$ and $2^{\omega_1}=\mathfrak c$ (\cite{CombinatorialBlass}, \cite{kunen1980set}, \cite{kunen2011set}).

\smallskip

In some sense, the theorem below is a strengthening of Theorem \ref{t1}. The only difference between them is the cardinality of the space.
\begin{example}\label{exampleH}There exists $X\subseteq \beta \omega$ of cardinality $2^{\mathfrak h}$ such that $ \CL(X)$ is not pseudocompact, $\omega\subseteq X$ and $X^{\kappa}$ is countably compact for every $\kappa<\mathfrak h$.
\end{example}

\begin{proof}
		We apply Theorem \ref{smallbig} by letting $\mu=\mathfrak h\leq \mathfrak c$, $\lambda=2^\mathfrak h$. It is clear that $\omega_1\leq \mathfrak h\leq \mathfrak c\leq 2^{\mathfrak h}\leq 2^{\mathfrak c}$. It is clear that $\lambda^{<\mathfrak h}=\lambda$, and $\operatorname{cf}(\lambda)> \mathfrak h$ by K\"{o}nig's Lemma. It remains to see that for every $Y\in [\omega^*]^{<\lambda}$, $\kappa\leq \mathfrak h$ and $F:\omega\rightarrow (\beta \omega\setminus Y)^\kappa$, $F$ has an accumulation point in $(\beta \omega\setminus Y)^\kappa$. For every $\delta<\kappa$, let $g_\delta=\pi_\delta\circ F$, where $\pi_\delta$ is the projection on the $\delta$-th coordinate.
	
	For each $\delta<\mathfrak \kappa$, let
$$
U_\delta=\{A \in [\omega]^\omega: \exists r \in \omega \text{ for which } g_\delta[A\setminus r] \text{ is a discrete subset of } \beta \omega,
$$
$$
\text{ and } g_\delta|(A\setminus r) \text{ is either a constant or an injective function}\}.
$$
	
	It is clear that $U_\delta$ is open dense in $[\omega]^\omega$. Since $\kappa_\alpha<\mathfrak h$, let $A \in \bigcap_{\delta<\kappa_\alpha}U_\delta$. Let $A^*\subseteq \beta A$ be the set of all free ultrafilters over $A$. Notice that if $g:A\rightarrow \beta \omega$ and there exists $r \in \omega$ such that $g|[A\setminus r]$ is injective, then whenever $p, q \in A^*$ are distinct, $p-\lim g\neq q-\lim g$. Define $B_\delta=\{p \in A^*: p-\lim g_\delta|A \notin Y\}$. It follows, that $|A^*\setminus B_\delta|\leq |Y|$, therefore $\bigcap_{\delta<\kappa_\alpha}B_\delta$ is nonempty. Let $r$ be an ultrafilter in this intersection. It follows that $r-\lim F \in (\beta \omega\setminus Y)^\kappa$.
	
\end{proof}

\section{Relating $(\kappa, \omega^*)$-pseudocompactness with the Pseudocompactness of $\CL(X)$}

Recall that if $X$ is $T_2$, then $\CL(X)$ is $T_1$ and that if $D\subseteq X$ is dense and $X$ is $T_1$, then $[D]^{<\omega}\setminus \{\emptyset\}\subseteq \CL(X)$ is dense. The proof of the lemma below is straightforward and the proof is left to the reader.

\begin{lemma}\label{l4.1}
If $D\subseteq X$ is an open discrete dense subset of  the space $X$ and $\kappa$ is a cardinal, then $D^\kappa$ is relatively countably compact in $X$ if and only if $X$ is $(\kappa, \omega^*)-$pseudocompact.
\end{lemma}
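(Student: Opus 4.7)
The plan is to prove both implications via the standard translation between sequences in the open discrete dense set $D$ and $p$-limits of sequences of open sets, combined with the elementary fact that in any topological space, a cluster point of a sequence equals the $p$-limit of that sequence for some free ultrafilter $p\in\omega^*$. I interpret the statement with the ambient space being $X^\kappa$ (the only place where $D^\kappa$ sits naturally).

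For the direction that $(\kappa,\omega^*)$-pseudocompactness implies relative countable compactness of $D^\kappa$ in $X^\kappa$, I would take an arbitrary $A\in[D^\kappa]^\omega$ and enumerate it with pairwise distinct entries as $A=\{a_n:n\in\omega\}$. Because $D$ is open and discrete in $X$, each singleton $V_n^\alpha:=\{a_n(\alpha)\}$ is open in $X$. Applying the hypothesis to the family $\{(V_n^\alpha:n\in\omega):\alpha<\kappa\}$ yields a free ultrafilter $p$ and points $x_\alpha\in X$ with $x_\alpha$ a $p$-limit of $(V_n^\alpha)$, which in the singleton case simply says $x_\alpha=p-\lim a_n(\alpha)$. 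Setting $x=(x_\alpha)_{\alpha<\kappa}$ and using that $p$-limits in products are coordinatewise, we get $x=p-\lim a_n$ in $X^\kappa$; then every neighbourhood of $x$ meets $\{a_n:n\in\omega\}$ on a set in $p$, hence infinitely, and the pairwise distinctness of the $a_n$ upgrades this to $x$ being a genuine accumulation point of $A$.

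For the converse, assume $D^\kappa$ is relatively countably compact in $X^\kappa$ and let $\{(V_n^\alpha:n\in\omega):\alpha<\kappa\}$ be a family of nonempty open subsets of $X$. By density of $D$, pick $d_n^\alpha\in V_n^\alpha\cap D$ and form $d_n=(d_n^\alpha)_{\alpha<\kappa}\in D^\kappa$. If $\{d_n:n\in\omega\}$ is infinite, the hypothesis provides an accumulation point $x\in X^\kappa$, from which the standard cluster-point-to-ultrafilter construction (extend the neighbourhood filter of $x$ pulled back along $n\mapsto d_n$ to a free ultrafilter) yields $p$ with $x=p-\lim d_n$. Otherwise the range is finite, some value is assumed on an infinite $N\subseteq\omega$, and any free $p$ containing $N$ gives a constant $p$-limit of $(d_n)$ in $D^\kappa\subseteq X^\kappa$. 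Reading off coordinates, $x_\alpha=p-\lim d_n^\alpha$ exists, and since $d_n^\alpha\in V_n^\alpha$ it is a $p$-limit of $(V_n^\alpha)$, because any neighbourhood $W$ of $x_\alpha$ satisfies $\{n:d_n^\alpha\in W\}\subseteq\{n:V_n^\alpha\cap W\neq\emptyset\}$ and the former is in $p$.

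The only delicate point is the finite-range case in the second direction, where relative countable compactness gives no information and must be bypassed by directly concentrating an ultrafilter on a constant-valued fiber. Everything else is routine manipulation of the coordinatewise behaviour of $p$-limits together with the observation that the open discreteness of $D$ forces the singletons $\{a_n(\alpha)\}$ to be open in the whole space $X$, which is what lets us feed them into the $(\kappa,\omega^*)$-pseudocompactness hypothesis.
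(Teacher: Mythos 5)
The paper explicitly leaves this lemma's proof to the reader, so there is no argument of the authors' to compare against; your proof is correct and is exactly the intended routine verification (reading ``relatively countably compact in $X$'' as ``in $X^\kappa$'', which is the only sensible reading). You correctly isolate the two points that are not completely automatic --- the finite-range case in the converse, where relative countable compactness says nothing and one must instead concentrate a free ultrafilter on a constant fiber, and the fact that open discreteness of $D$ makes each singleton $\{a_n(\alpha)\}$ open in all of $X$ so that it can be fed into the $(\kappa,\omega^*)$-pseudocompactness hypothesis --- and the only implicit assumption is enough separation (e.g.\ $T_1$, which holds in all of the paper's applications, where $X\subseteq\beta\kappa$) to guarantee that the neighbourhood filter of a cluster point of an infinite set pulls back to a filter extendable to a \emph{free} ultrafilter.
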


\begin{theorem}\label{ispseudo}Suppose $X$ is $T_2$. Let $D\subseteq X$ be a dense subset of $X$. If $D^\mathfrak c$ is relatively countably compact in $X^\mathfrak c$ then $[D]^{<\omega}\setminus \{\emptyset\}$ is relatively countably compact in $\CL(X)$ and $\CL(X)$ is pseudocompact.
\end{theorem}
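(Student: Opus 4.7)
The plan is to establish the relative countable compactness of $[D]^{<\omega}\setminus\{\emptyset\}$ in $\CL(X)$ first, and then deduce pseudocompactness from the fact that this set is dense in $\CL(X)$ (as recalled just before Lemma \ref{l4.1}). Fix a sequence $(F_n: n\in \omega)$ in $[D]^{<\omega}\setminus\{\emptyset\}$; I will produce a $p$-limit for it in $\CL(X)$ for some free ultrafilter $p$ on $\omega$. Let $\mathcal{G}=\prod_{n\in\omega}F_n$; since every $F_n$ is finite nonempty, $|\mathcal{G}|\leq\mathfrak{c}$, and I enumerate $\mathcal{G}=\{g_\alpha:\alpha<\mathfrak{c}\}$, with repetitions if necessary. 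Define $h_n\in D^{\mathfrak{c}}$ by $h_n(\alpha)=g_\alpha(n)$. The hypothesis yields an accumulation point $a\in X^{\mathfrak{c}}$ of $(h_n:n\in\omega)$, and hence a free ultrafilter $p$ with $p-\lim h_n = a$; reading this coordinatewise, $p-\lim g_\alpha = a(\alpha)\in X$ for every $\alpha<\mathfrak{c}$. Thus a single free ultrafilter $p$ witnesses that the $p$-limit of every $g\in\mathcal{G}$ exists in $X$.

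Set $F=\cl_X\{p-\lim g : g\in \mathcal{G}\}\subseteq X$, a nonempty closed set. I claim $F=p-\lim F_n$ in $\CL(X)$. Given a basic open $\langle U_0,\ldots,U_k\rangle$ containing $F$: for each $i\leq k$, $F\cap U_i\neq\emptyset$ and $U_i$ is open, so some $g^{(i)}\in\mathcal{G}$ satisfies $p-\lim g^{(i)}\in U_i$, whence $\{n: F_n\cap U_i\neq\emptyset\}\supseteq\{n: g^{(i)}(n)\in U_i\}\in p$. For the upper condition, if $A=\{n: F_n\not\subseteq U_0\cup\cdots\cup U_k\}$ were in $p$, I would define $g\in\mathcal{G}$ by picking $g(n)\in F_n\setminus(U_0\cup\cdots\cup U_k)$ for $n\in A$ and an arbitrary element of $F_n$ otherwise; then $p-\lim g\in X$ could not lie in the open set $U_0\cup\cdots\cup U_k$, contradicting $p-\lim g\in F$. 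A finite intersection of these $p$-large sets places $\{n: F_n\in \langle U_0,\ldots,U_k\rangle\}$ in $p$, so $F$ is the $p$-limit of $(F_n)$ in $\CL(X)$; since $p$ is free, $F$ is an accumulation point in $\CL(X)$.

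For pseudocompactness, suppose toward contradiction that some continuous $f:\CL(X)\to\mathbb{R}$ is unbounded. By density of $[D]^{<\omega}\setminus\{\emptyset\}$ in $\CL(X)$ and continuity of $f$, I can pick $F_n\in[D]^{<\omega}\setminus\{\emptyset\}$ with $|f(F_n)|>n$; the first part supplies an accumulation point $F\in\CL(X)$ of $(F_n)$, and continuity forces $f(F)$ to be an accumulation point of $(f(F_n))$ in $\mathbb{R}$, contradicting $|f(F_n)|\to\infty$. The step I expect to be the main obstacle is the verification of the upper Vietoris condition; it is essential there that $\mathcal{G}$ be taken to be the \emph{full} product $\prod_n F_n$ rather than some selector subfamily, so that whichever $p$-large set $A$ is encountered, a sequence $g$ routing through $F_n\setminus\bigcup_i U_i$ on $A$ is always in $\mathcal{G}$ and the hypothesis still delivers its $p$-limit inside $X$.
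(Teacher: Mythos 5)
Your proof is correct and follows essentially the same route as the paper's: form $\mathcal G=\prod_n F_n$, use the relative countable compactness of $D^{\mathfrak c}$ in $X^{\mathfrak c}$ to obtain a single free ultrafilter $p$ giving every $g\in\mathcal G$ a $p$-limit in $X$, take $F=\cl_X\{p\text{-}\lim g: g\in\mathcal G\}$, and check the upper and lower Vietoris conditions exactly as in the paper. The only differences are cosmetic: you make explicit the passage from an accumulation point of the sequence $(h_n)$ in $X^{\mathfrak c}$ to the ultrafilter $p$ (which the paper leaves implicit), and you prove directly the standard fact that a space with a dense relatively countably compact subset is pseudocompact rather than citing it.
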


\begin{proof}Let $f:\omega\rightarrow [D]^{<\omega}\setminus \{\emptyset\}$ be a sequence of nonempty finite subsets of $D$ and let $F_n=f(n)$. Let $\mathcal G=\{g \in D^\omega: g(n) \in F_n\}$. It follows that $|\mathcal G|\leq \mathfrak c$. Since $D^\mathfrak c$ is relatively countably compact in $X^\mathfrak c$, there exists a free ultrafilter $p$ such that for every $g \in \mathcal G$, there exists a $p-\lim g$ in $X$. Let $Z=\{p-\lim g: g \in \mathcal G\}$ and let $F=\cl Z$. We claim that $p-\lim f=F$. So let $U$ be an open neighborhood of $F$. We must verify that $\{n \in \omega: F_n \in W\} \in p$. It suffices to prove this claim for $U$ subbasic. So let $W\subseteq \omega$ be open. If $U=\{K \in \CL(X): K\subseteq W\}$, suppose by contradiction that $\{n \in \omega: F_n \in W\}\notin p$. Then $A=\{n \in \omega: F_n \setminus W\neq \emptyset\} \in p$. Let $g \in \mathcal G$ be such that $g(n) \in F_n \setminus W$ for every $n \in A$. Then $p-\lim g \in X\setminus W$, a contradiction since $Z\subseteq W$.
	
	Now suppose $U=\{K \in \CL(X): K\cap W \neq \emptyset\}$. Since $F\cap W\neq \emptyset$, there exists $x \in Z\cap W$ and there exists $g \in \mathcal G$ such that $x=p-\lim g$. It follows that $p \ni \{n \in \omega: g(n) \in W\}\subseteq\{n \in \omega: F_n \cap W\neq \emptyset\}$. This proves that $[D]^{<\omega}\setminus \{\emptyset\}$ is relatively countably compact. To see that $\CL(X)$ is pseudocompact, recall that for every space $Y$, if there exists a dense subset of $Y$ that is relatively countably compact in $Y$, then $Y$ is pseudocompact.
	
\end{proof}

\begin{cor}\label{cor36}
	Suppose $\omega\subset X\subset \beta \omega$ and suppose $X$ is $(\mathfrak c, \omega^*)$-pseudocompact (equivalently, that $\omega^\mathfrak c$ is relatively countably compact in $X^\kappa$). Then $\CL(X)$ is pseudocompact.
\end{cor}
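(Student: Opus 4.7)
The plan is to deduce this as a direct application of Theorem \ref{ispseudo} with the choice $D = \omega$. First I would observe that since $\omega \subseteq X \subseteq \beta\omega$ and $\omega$ is dense in $\beta\omega$, the subset $\omega$ is dense in $X$; moreover $\omega$ is open in $\beta\omega$ (as a union of isolated points), hence open in $X$, and of course discrete. So $D = \omega$ satisfies the hypothesis that it is an open discrete dense subset.

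Next I would invoke Lemma \ref{l4.1}, applied with $\kappa = \mathfrak c$, to translate the assumption: since $X$ is $(\mathfrak c, \omega^*)$-pseudocompact, the lemma gives that $\omega^{\mathfrak c}$ is relatively countably compact in $X^{\mathfrak c}$. This is precisely the hypothesis needed to feed into Theorem \ref{ispseudo} (with $D = \omega$).

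Finally, applying Theorem \ref{ispseudo} yields that $[\omega]^{<\omega}\setminus\{\emptyset\}$ is relatively countably compact in $\CL(X)$ and, in particular, that $\CL(X)$ is pseudocompact. No further computation is required; the whole content of the corollary is just the identification that $\omega$ is a well-behaved (open, discrete, dense) witnessing subset inside every $X$ with $\omega \subseteq X \subseteq \beta\omega$. There is no real obstacle here; the corollary is an unpacking of definitions combined with the preceding lemma and theorem.
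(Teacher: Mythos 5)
Your proposal is correct and is exactly the argument the paper intends: the corollary is the special case $D=\omega$ of Theorem \ref{ispseudo}, with Lemma \ref{l4.1} supplying the stated equivalence between $(\mathfrak c,\omega^*)$-pseudocompactness of $X$ and relative countable compactness of $\omega^{\mathfrak c}$ in $X^{\mathfrak c}$. No divergence from the paper's route.
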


Recall that two ultrafilters $q_0, q_1$ over $\omega$ are said to be incomparable if for every bijection $f:\omega\rightarrow \omega$, the ultrafilter generated by $\{f[A]: A \in q_0\}$ is not $q_1$. Example \ref{ultrafilters} implies that, since it is consistent with the axioms of ZFC that there exists $\mathfrak c$ pairwise incomparable selective ultrafilters (\cite{CombinatorialBlass}), we may not weaken the cardinal $\mathfrak c$ on Corollary \ref{cor36}. The lemma below is well known, so we state it with no proof.

\begin{lemma} If $h_0, h_1 \in \omega ^\omega$ and $q_0, q_1$ are two incomparable selective ultrafilters such that $h_i$ is not $q_i$-equivalent to a constant
sequence, then, in $\beta \omega$, $q_0-\lim h_0\neq q_1-\lim h_1$.
\end{lemma}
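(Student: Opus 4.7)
The plan is to argue by contrapositive: assuming $p := q_0-\lim h_0 = q_1-\lim h_1$, I will construct a bijection $\tilde f:\omega\to\omega$ such that $\{\tilde f[A]: A\in q_0\}=q_1$, contradicting the incomparability of $q_0$ and $q_1$. First, by selectivity of each $q_i$, there is $A_i\in q_i$ on which $h_i$ is either constant or injective; since $h_i$ is not $q_i$-equivalent to a constant, $h_i|A_i$ must in fact be injective.

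Next, set $B_i = h_i[A_i]$. Because $h_i^{-1}[B_i]\supseteq A_i\in q_i$, the definition of $p$-limit gives $B_i\in p$, hence $B := B_0\cap B_1\in p$. Put $A_i^* = A_i\cap h_i^{-1}[B]$; then $A_i^*\in q_i$ and $h_i|A_i^*$ is a bijection from $A_i^*$ onto $B$. To arrange that both complements in $\omega$ are infinite (so that a global bijection can be built later), partition $A_0^*$ into two disjoint infinite pieces and let $A_0'$ be the one lying in $q_0$. Set $B'=h_0[A_0']$ and $A_1'=h_1^{-1}[B']\cap A_1^*$; then $B'\in p$ (since $h_0^{-1}[B']\supseteq A_0'\in q_0$), so $A_1'\in q_1$. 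Injectivity of $h_i|A_i^*$ gives $|A_1^*\setminus A_1'|=|B\setminus B'|=|A_0^*\setminus A_0'|$, which is infinite, so both $\omega\setminus A_0'$ and $\omega\setminus A_1'$ are infinite, and $h_0[A_0']=h_1[A_1']=B'$.

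Finally, define $\tilde f:\omega\to\omega$ by $\tilde f|A_0' = (h_1|A_1')^{-1}\circ (h_0|A_0')$ and extend via any bijection from $\omega\setminus A_0'$ onto $\omega\setminus A_1'$. To verify that $\tilde f$ sends $q_0$ to $q_1$, let $C\in q_1$ and put $C'=C\cap A_1'\in q_1$. From $h_1^{-1}[h_1[C']]\supseteq C'\in q_1$ we get $h_1[C']\in p$, and since $p=q_0-\lim h_0$ this forces $h_0^{-1}[h_1[C']]\in q_0$. Thus $\tilde f^{-1}[C]\supseteq \tilde f^{-1}[C']=h_0^{-1}[h_1[C']]\cap A_0'\in q_0$; as $C\in q_1$ was arbitrary and both $q_1$ and $\{\tilde f[A]:A\in q_0\}$ are ultrafilters, they must coincide, contradicting incomparability. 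The main bookkeeping point is the coordinated shrinking $A_i^*\to A_i'$, chosen simultaneously so that $A_i'\in q_i$, the images $h_0[A_0']$ and $h_1[A_1']$ remain equal, and both complements stay infinite; once this is in place, the verification that the bijection transports $q_0$ onto $q_1$ is a direct translation along the two $p$-limit identities.
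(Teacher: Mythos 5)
The paper states this lemma without proof (``The lemma below is well known, so we state it with no proof''), so there is no in-paper argument to compare against; judged on its own, your proof is correct and is the standard one: selectivity gives sets $A_i\in q_i$ on which $h_i$ is injective (constancy being excluded by hypothesis), so each $q_i$ is carried by $h_i$ onto the common limit $p$ via a map that, after the coordinated shrinking you perform, can be completed to a permutation of $\omega$ witnessing that $q_0$ and $q_1$ are comparable. All the individual steps check out: $B_i=h_i[A_i]\in p$ because $h_i^{-1}[B_i]\supseteq A_i\in q_i$; the sets $A_i^*$ map bijectively onto $B$; the split of $A_0^*$ into two infinite pieces correctly guarantees that both complements $\omega\setminus A_0'$ and $\omega\setminus A_1'$ are infinite (hence equinumerous), so $\tilde f$ extends to a bijection of $\omega$; and the final computation $\tilde f^{-1}[C']=h_0^{-1}[h_1[C']]\cap A_0'\in q_0$ shows $q_1\subseteq\tilde f_*(q_0)$, whence equality of ultrafilters, contradicting incomparability as the paper defines it (via bijections). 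No gaps.
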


\begin{cor} Given a ultrafilter $q$ denote by $S(q)=\{q-\lim h: h \in \omega^\omega\}\cap \omega^*$. Then if $q_1, q_2$ are incomparable selective ultrafilters, $S(q_1)\cap S(q_2)=\emptyset$.
\end{cor}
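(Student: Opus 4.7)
The plan is to apply the preceding lemma directly, after verifying that its hypothesis on non-constancy is automatic for points of the remainder $\omega^*$. So I would argue by contradiction: assume there exists $x \in S(q_1) \cap S(q_2)$, which by definition gives functions $h_1, h_2 \in \omega^\omega$ with
\[
x = q_1\text{-}\lim h_1 = q_2\text{-}\lim h_2 \in \omega^*.
\]

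The first step is to observe that neither $h_i$ can be $q_i$-equivalent to a constant sequence. Indeed, if there were $k \in \omega$ and $A \in q_i$ with $h_i|_A \equiv k$, then the $q_i$-limit of $h_i$ in $\beta\omega$ would be the isolated point $k \in \omega$, contradicting $x \in \omega^*$. This verifies the hypothesis of the previous lemma for both $i=0,1$.

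The second step is to invoke that lemma: since $q_1$ and $q_2$ are incomparable selective ultrafilters and neither $h_i$ is $q_i$-equivalent to a constant, the lemma yields $q_1\text{-}\lim h_1 \neq q_2\text{-}\lim h_2$ in $\beta\omega$. This contradicts the assumption that both limits equal $x$, completing the proof.

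There is no real obstacle here; the entire argument is essentially bookkeeping on top of the stated lemma, with the only non-cosmetic observation being that membership in $\omega^*$ rules out the constant-modulo-$q_i$ case which the lemma excludes.
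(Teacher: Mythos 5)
Your argument is correct and is exactly the derivation the paper intends (the paper leaves the corollary unproved as an immediate consequence of the preceding lemma): the only substantive point is that the intersection with $\omega^*$ in the definition of $S(q)$ rules out the case where $h_i$ is $q_i$-equivalent to a constant, and you verify this before applying the lemma. The index slip ($i=0,1$ versus your labels $h_1,h_2$) is purely cosmetic.
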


\begin{example}\label{ultrafilters} Assume that ${\mathfrak c}$ is regular and that there exists ${\mathfrak c}$ incomparable selective ultrafilters. Then there exists $X$, $\omega \subseteq X \subseteq \beta \omega$ such that $\omega ^\alpha$ is relatively countably compact in $X^\alpha$ for each $\alpha <{\mathfrak c}$ but $\CL(X)$ is not pseudocompact.
\end{example}

\begin{proof} Enumerate $\omega^\omega$ as $\{f_\alpha:\,\alpha <{\mathfrak c}\}$ where $f_0$ is the constant $0$ function. Let $C_n =[2^n, 2^{n+1}-1]$ for each $n\in \omega$ and ${\mathcal G}=\{g\in \omega^\omega:\forall n \in \omega\,(f(n)\in C_n)\}$. Fix a surjective function $i:\, {\mathfrak c} \longrightarrow {\mathfrak c} \times {\mathfrak c}$ such that $i(\mu)=(\xi_\mu,\eta_\mu)$ with $\xi_\mu \leq \mu$ for each $ \mu < {\mathfrak c}$.

Given a ultrafilter $p$, let $S_\alpha(p)=\{x \in \omega^*:\exists \beta \leq \alpha\,(x=p-\lim f_\beta)\}$,  $S(p)=\{p-\lim h: h \in \omega^\omega\}$ and $Z_p=\{p-\lim h: h \in \mathcal G\}$. Recursively, we will define for $\alpha<\mathfrak c$, selective ultrafilters $p_\alpha$, subsets of $\beta\omega$ $X_\alpha$, sets of free ultrafilters $P_\alpha= \{q_{(\alpha, \mu)}:\mu<\mathfrak c\}$ and $y_\alpha \in \beta \omega$ such that:

\begin{enumerate}
	\item $X_0=\omega \cup S_0(p_0)$, $X_\alpha=S_\alpha(p_\alpha)$ for $0<\alpha<\mathfrak c$,
	\item The family $\{X_\alpha: \alpha<\mathfrak c\}\cup\{\{y_\alpha: \alpha<\mathfrak c\}\}$ is pairwise disjoint,
	\item $\{q \in \omega^*: Z_q\cap X_\alpha \neq \emptyset\}\subset P_\alpha$, and
	\item $y_\alpha \in Z_{q_{i(\alpha)}}\setminus \left(\bigcup_{\beta\leq \alpha} X_\beta \cup \{y_\beta: \beta<\alpha\}\right)$.
	
\end{enumerate}

 Fix $p_0$ an arbitrary selective ultrafilter. Set $X_0=\omega \cup S_0(p_0)$. Since $|X_0|<\mathfrak c$ and $\{q\in \omega^*: Z_q \cap X_0=\emptyset\}=\bigcup_{g \in \mathcal G}\{q \in \omega^*:q-\lim g \in X_0\}$, the latter has cardinality $\leq \mathfrak c$, therefore we can enumerate ultrafilters $P_0=\{ q_{(0,\mu)}:\, \mu < {\mathfrak c}\}$ containing this set. Set $y_0 \in Z_{q_{i(0)}} \setminus X_0$, which is possible by Lemma \ref{ContZp}.

Suppose that $X_\beta$, ${P}_\beta =\{q_{(\beta , \mu)}:\, \mu <{\mathfrak c}\}$, $p_\beta $ and $\{y_\beta :\, \beta < \alpha \}$ have been defined for every $\beta<\alpha$ for some $\alpha \in [1, \mathfrak c)$.

Since the $S(p)$'s are pairwise disjoint for incomparable selective ultrafilters and $|\{y_\beta :\, \beta < \alpha \}|<\mathfrak c$, there exists a selective ultrafilter $p\notin \{p_\beta: \beta<\alpha\}$ such that $S(p) \cap (\bigcup_{\beta < \alpha} X_\beta \cup \{y_\beta :\, \beta < \alpha \})=\emptyset$. Denote such $p$ as $p_\alpha$ and set $X_\alpha =S_\alpha (p_\alpha)$. As before, define ${P}_\alpha\supset\{q \in \omega^*:Z_q \cap X_\alpha \neq \emptyset\}$ enumerating it as $\{q_{(\alpha, \mu)}:\, \mu <{\mathfrak c}\} \supseteq {P}_\alpha$. Fix $y_\alpha \in Z_{q_{i(\alpha)}}\setminus (\bigcup_{\beta \leq \alpha} X_\beta \cup \{y_\beta :\, \beta < \alpha \})$. This ends the construction.

{\em Claim 1}. $\omega^\mu$ is relatively countably compact in $X^\mu$ for each $\mu <{\mathfrak c}$.

Let $(g_\xi :\, \xi < \mu) $ be a collection of $\mu$ sequences into $\omega$. Since ${\mathfrak c}$ is regular, there exists $\alpha < {\mathfrak c}$ such that
$\{g_\xi :\, \xi < \mu\} \subseteq \{f_\beta :\, \beta < \alpha \}$. Then each $g_\xi $ has $p_\alpha$-limit in $S_\alpha (p_\alpha) \cup \omega \subseteq X$.

\smallskip

{\em Claim 2}. $\{ C_n:\, n \in \omega \}$ witnesses that $\CL(X)$ is not pseudocompact.

Suppose that $\{ C_n:\, n \in \omega \}$ has an accumulation point. Then by Lemma \ref{ContZp}, there exists $q \in \omega^*$ such that $Z_q \subseteq X$. Let $\alpha < {\mathfrak c}$ such that $Z_q \cap X_\alpha \neq \emptyset$.
Then $q = q_{(\alpha, \mu)}$ for some $\mu <{\mathfrak c}$. Let $\theta < {\mathfrak c}$ such that $i(\theta)=(\alpha, \mu)$. Then $q=q_{i(\theta)}$ and $y_\theta \in Z_q \setminus X$, a contradiction.
Therefore, $\CL(X)$ is not pseudocompact.
\end{proof}
Theorem \ref{ispseudo} may also be used to give a simpler proof of Theorem 3.2 of \cite{PseudoHyper} as we will see, that states that under $\mathfrak p=\mathfrak c$, $\CL(\Psi(\mathcal A))$ is pseudocompact for every mad family $\mathcal A$. S. Shelah and M. Malliaris have recently proved that $\mathfrak p=\mathfrak t$ (\cite{PisT}), however, we avoid the complicated model theoretic proof of $\mathfrak p =\mathfrak t$.

\begin{lemma}
	
For every mad family $\mathcal A$, $\omega^\mathfrak t$ is relatively countably compact in $\Psi(\mathcal A)^{\mathfrak t}$, thus, $\Psi(\mathcal A)^\mathfrak t$ is $(\mathfrak t, \omega^*)-$pseudocompact.
\end{lemma}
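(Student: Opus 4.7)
The second assertion is immediate from Lemma \ref{l4.1}, since $\omega$ is an open discrete dense subset of $\Psi(\mathcal{A})$. So the task reduces to showing that $\omega^{\mathfrak{t}}$ is relatively countably compact in $\Psi(\mathcal{A})^{\mathfrak{t}}$. Given a sequence $(x_n)_{n \in \omega}$ in $\omega^{\mathfrak{t}}$, define $g_\alpha(n) = x_n(\alpha) \in \omega$ for each $\alpha < \mathfrak{t}$. The plan is to produce a single free ultrafilter $p \in \omega^*$ such that each $g_\alpha$ has a $p$-limit in $\Psi(\mathcal{A})$; then the point $y \in \Psi(\mathcal{A})^{\mathfrak{t}}$ with $y(\alpha) = p\text{-}\lim g_\alpha$ will be the required accumulation point of $(x_n)_{n \in \omega}$.

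To produce $p$, I would recursively build a $\subseteq^*$-decreasing tower $(T_\alpha)_{\alpha < \mathfrak{t}}$ of infinite subsets of $\omega$ arranging that, for each $\alpha < \mathfrak{t}$, either $g_\alpha$ is constant on $T_{\alpha+1}$ or $g_\alpha|T_{\alpha+1}$ is finite-to-one with $g_\alpha[T_{\alpha+1}] \subseteq A_\alpha$ for some $A_\alpha \in \mathcal{A}$. Starting from $T_0 = \omega$: at a successor stage, if some $k \in \omega$ is attained infinitely often by $g_\alpha$ on $T_\alpha$, set $T_{\alpha+1} = g_\alpha^{-1}(k) \cap T_\alpha$; otherwise $g_\alpha|T_\alpha$ is finite-to-one, so $g_\alpha[T_\alpha]$ is infinite and maximality of $\mathcal{A}$ provides $A_\alpha \in \mathcal{A}$ with $|g_\alpha[T_\alpha] \cap A_\alpha| = \omega$, whence $T_{\alpha+1} := T_\alpha \cap g_\alpha^{-1}[A_\alpha]$ is infinite. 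At limit stages $\alpha < \mathfrak{t}$, a pseudo-intersection of the already-built tower exists by the definition of $\mathfrak{t}$; take $T_\alpha$ to be any such pseudo-intersection.

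Finally, extend the filter generated by $\{T_\alpha : \alpha < \mathfrak{t}\}$, which has the SFIP because the tower is $\subseteq^*$-decreasing, to a free ultrafilter $p$ on $\omega$. For each $\alpha < \mathfrak{t}$ we have $T_{\alpha+1} \in p$: in the first case this forces $p\text{-}\lim g_\alpha = k_\alpha \in \omega$; in the second case, checking each basic neighborhood $\{A_\alpha\} \cup (A_\alpha \setminus k)$ of $A_\alpha$ and using that $g_\alpha|T_{\alpha+1}$ is finite-to-one (so $\{n \in T_{\alpha+1} : g_\alpha(n) \geq k\}$ is cofinite in $T_{\alpha+1}$ and thus lies in the free ultrafilter $p$) gives $p\text{-}\lim g_\alpha = A_\alpha \in \Psi(\mathcal{A})$. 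The main delicate point is the limit stage of the recursion, which is exactly what the hypothesis $\alpha < \mathfrak{t}$ handles; the successor step reduces to a routine pigeonhole combined with maximality of $\mathcal{A}$.
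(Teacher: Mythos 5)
Your proof is correct and follows essentially the same route as the paper's: a $\subseteq^*$-tower of length $\mathfrak t$ refined coordinate by coordinate, using maximality of $\mathcal A$ at successor stages and the definition of $\mathfrak t$ at limit stages, and then an ultrafilter extending the tower. You are in fact slightly more careful than the paper, which skips the case where $g_\alpha$ has finite range on the current tower element (so that no $A\in\mathcal A$ can meet the range in an infinite set); your pigeonhole split into the constant and finite-to-one cases, and your explicit verification that the $p$-limit is $A_\alpha$ in the latter case, close that small gap.
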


\begin{proof}Let $F \in \omega^\mathfrak t$. We will see that there exists a free ultrafilter $p$ such that $p-\lim F$ exists in $\Psi(\mathcal A)^{\mathfrak t}$. For each $\delta<\mathfrak c$, let $g_\alpha=\pi_\alpha\circ F$, where $\pi_\alpha$ is the $\alpha$-th projection.
		
			Recursively, we define a $\supseteq^*$-tower $(A_\alpha: \alpha<\mathfrak t)$ such that for each $\alpha<\mathfrak c$ the $g_\alpha|A_{\alpha+1}$ converges as follows: Let $A_0=\omega$. Suppose we have defined $A_\alpha$. There exists $A \in \mathcal A$ such that $|A\cap g_\alpha[A_\alpha]|=\omega$.  Let $A_{\alpha+1}=g_\alpha^{-1}[A\cap g_\alpha[A_\alpha]]$, so $g_\alpha|A_{\alpha+1}$ converges. For the limit step $\alpha$, let $A_\alpha$ be a pseudo-intersection of $(A_\beta: \beta<\alpha)$. Let $p$ be a free ultrafilter containing each element of the tower. Notice that since $A_{\alpha+1} \in p$ for each $\alpha$, $p-\lim g_\alpha$ exists, therefore $p-\lim F$ exists.
			
\end{proof}
By combining the above lemma and Theorem \ref{ispseudo}, we get Theorem 3.2 of \cite{PseudoHyper}, which states that under $\mathfrak p=\mathfrak c$, $\CL(\Psi(\mathcal A))$ is pseudocompact for every mad family $\mathcal A$.

\begin{cor}Assuming $\mathfrak t=\mathfrak c$, $\CL(\Psi(\mathcal A))$ is pseudocompact for every mad family $\mathcal A$.\end{cor}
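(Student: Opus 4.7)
The plan is to combine the preceding lemma directly with Theorem \ref{ispseudo}; under the hypothesis $\mathfrak t=\mathfrak c$, the lemma produces exactly the input that Theorem \ref{ispseudo} requires, with $X=\Psi(\mathcal A)$ and $D=\omega$.

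First I would check that the ambient space meets the hypotheses of Theorem \ref{ispseudo}. The introduction records that $\Psi(\mathcal A)$ is Tychonoff (hence $T_2$) and that $\omega$ sits inside $\Psi(\mathcal A)$ as an open discrete dense subset; so taking $D=\omega$ fits the statement of the theorem. Second, the preceding lemma gives that $\omega^{\mathfrak t}$ is relatively countably compact in $\Psi(\mathcal A)^{\mathfrak t}$. Substituting the working assumption $\mathfrak t=\mathfrak c$, this is precisely the statement that $\omega^{\mathfrak c}$ is relatively countably compact in $\Psi(\mathcal A)^{\mathfrak c}$. Theorem \ref{ispseudo} then outputs that $[\omega]^{<\omega}\setminus\{\emptyset\}$ is relatively countably compact in $\CL(\Psi(\mathcal A))$, and hence that $\CL(\Psi(\mathcal A))$ is pseudocompact, which is the desired conclusion.

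There is essentially no obstacle here: the preceding lemma was extracted precisely so that its conclusion matches the hypothesis of Theorem \ref{ispseudo}, and the whole point of routing the argument this way is to avoid invoking the Malliaris--Shelah result $\mathfrak p=\mathfrak t$. In particular, the proof runs from the a priori weaker assumption $\mathfrak t=\mathfrak c$ and recovers Theorem~3.2 of \cite{PseudoHyper} (originally stated under $\mathfrak p=\mathfrak c$) without any model-theoretic input.
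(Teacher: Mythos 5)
Your proposal is correct and is exactly the paper's argument: the authors likewise obtain the corollary by substituting $\mathfrak t=\mathfrak c$ into the preceding lemma (so that $\omega^{\mathfrak c}$ is relatively countably compact in $\Psi(\mathcal A)^{\mathfrak c}$) and then applying Theorem \ref{ispseudo} with $X=\Psi(\mathcal A)$ and $D=\omega$. Your added checks that $\Psi(\mathcal A)$ is $T_2$ and that $\omega$ is dense are the right ones and match the paper's setup.
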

\section{Relating the Pseudocompactness of $\CL(X)$ with $(\kappa, \omega^*)$-pseudocompactness}
Now we investigate what happens if $\CL(X)$ is pseudocompact.

\begin{theorem}\label{EverySmallKappa} Let $X$ be a subspace of $\beta \omega$ that contains $\omega$. If $\CL(X)$ is pseudocompact, then for every $\kappa<\mathfrak h$, $\omega^\kappa$ is relatively countably compact in $X^\kappa$, therefore the latter is $(\kappa, \omega^*)$-pseudocompact.
\end{theorem}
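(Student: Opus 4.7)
The plan is to produce, for a given sequence $F\colon\omega\to\omega^\kappa$ with coordinates $g_\delta=\pi_\delta\circ F$, a free ultrafilter $p$ on $\omega$ such that the pushforward $g_{\delta*}(p)=p\text{-}\lim g_\delta$ lies in $X$ for every $\delta<\kappa$. Then $p\text{-}\lim F\in X^\kappa$ is an accumulation point of $F$, and Lemma~\ref{l4.1} delivers the $(\kappa,\omega^*)$-pseudocompactness. I will find $p$ by exploiting the $\mathfrak h$-distributivity of $[\omega]^\omega/\mathrm{fin}$: for each $\delta<\kappa$ define
\[
\mathcal W_\delta=\{A\in[\omega]^\omega:\text{every free ultrafilter }p\text{ with }A\in p\text{ satisfies }g_{\delta*}(p)\in X\}.
\]
Openness of $\mathcal W_\delta$ in the $\subseteq^*$-order is immediate, since $B\subseteq^* A$ forces every free ultrafilter containing $B$ to also contain $A$. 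Once each $\mathcal W_\delta$ is shown dense, the assumption $\kappa<\mathfrak h$ produces some $A\in\bigcap_{\delta<\kappa}\mathcal W_\delta$, and any free $p\ni A$ is the desired ultrafilter.

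Density of $\mathcal W_\delta$ reduces to the following \emph{Key Lemma}: for every infinite $D\subseteq\omega$ there is an infinite $D'\subseteq D$ with $\cl_{\beta\omega}(D')\cap\omega^*\subseteq X$. Indeed, given $A\in[\omega]^\omega$, either $g_\delta$ is constant on some infinite $A'\subseteq A$ (so $A'\in\mathcal W_\delta$, because the $p$-limit is that constant, which lies in $\omega\subseteq X$), or, after a standard refinement making $g_\delta|A$ injective, I apply the Key Lemma to $D=g_\delta[A]$ and set $A'=A\cap g_\delta^{-1}[D']$: every free $p\ni A'$ has $g_{\delta*}(p)\in\cl_{\beta\omega}(D')\cap\omega^*\subseteq X$, so $A'\in\mathcal W_\delta$.

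The Key Lemma is the main obstacle. My approach is by contradiction: assume for some infinite $D$ that $\omega^*\setminus X$ is dense in $\cl_{\beta\omega}(D)\cap\omega^*$, and construct a sequence in $\CL(X)$ with no accumulation point. A preliminary observation, obtained by applying pseudocompactness to the singleton sequence enumerating $D$, is that $X\cap\omega^*$ must meet every clopen $\cl_{\beta\omega}(B)\cap\omega^*$ with $B\subseteq D$ infinite; so under the contradiction hypothesis both $X\cap\omega^*$ and its complement are dense in $\cl_{\beta\omega}(D)\cap\omega^*$. The main tool is Lemma~\ref{ContZp}: for any partition of $D$ into pairwise disjoint finite sets $B_n$ with $|B_n|\to\infty$, pseudocompactness yields a free ultrafilter $p$ with $Z_p\subseteq X$ and $|Z_p|=\mathfrak c$. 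Iterating this with an almost-disjoint family $\{A_n\}\subseteq[D]^\omega$ and witnesses $q_n\in\cl_{\beta\omega}(A_n)\cap(\omega^*\setminus X)$, I would assemble a sequence of finite subsets of $X$ whose every candidate accumulation $F\in\CL(X)$ is excluded: the clopenness of $\cl_{\beta\omega}(A_n)\cap X$ in $X$ together with the positions of the $q_n$'s outside $X$ allows separating $F$ from cofinitely many members of the sequence by a subbasic neighborhood in $\CL(X)$ of the form $U^+$ or $V^-$. Carrying out this construction so that it genuinely rules out every $F\in\CL(X)$ is the central technical difficulty; once the Key Lemma is in hand, combining it with the $\mathfrak h$-distributivity argument above completes the proof of the theorem.
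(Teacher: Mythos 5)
There is a fatal gap: your Key Lemma is false, and with it the density (indeed, nonemptiness) of the sets $\mathcal W_\delta$. The Key Lemma asserts that for every infinite $D\subseteq\omega$ there is an infinite $D'\subseteq D$ with $\cl_{\beta\omega}(D')\cap\omega^*\subseteq X$. But $\cl_{\beta\omega}(D')\cap\omega^*$ is homeomorphic to $\omega^*$ and has cardinality $2^{\mathfrak c}$, whereas the space $X$ built in Theorem \ref{NonPseudo} (used in Example \ref{ExistsB}) has $\CL(X)$ pseudocompact and $|X|\leq\mathfrak c$, being a union of $\mathfrak c$ many sets of size at most $\mathfrak c$. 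So no set of the form $\cl_{\beta\omega}(D')\cap\omega^*$ can fit inside such an $X$. The same counting kills your sets $\mathcal W_\delta$ directly: taking $g_\delta$ to be the identity (a perfectly legitimate sequence of natural numbers), $\mathcal W_\delta=\{A: \cl_{\beta\omega}(A)\cap\omega^*\subseteq X\}$ is empty whenever $|X|<2^{\mathfrak c}$, even though the theorem is true for that $X$. The underlying error is quantitative: you demand that \emph{every} free ultrafilter concentrating on some $A$ push all the $g_\delta$ into $X$, when the statement only requires the existence of \emph{one} such ultrafilter, and pseudocompactness of $\CL(X)$ only ever hands you one ultrafilter at a time.

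The paper's proof shows how to get by with a single ultrafilter. The $\mathfrak h$-distributivity is used only for a preliminary normalization (passing to a common infinite set on which each $x_\alpha$ is eventually constant or eventually strictly increasing); then, since $\mathfrak h\leq\mathfrak b$, a single function $a$ dominates all $\kappa$ sequences, and one builds from $a$ a partition of $\omega$ into consecutive finite intervals $(C_k)_{k\in\omega}$ arranged so that each $x_\alpha$ eventually selects one point from each $C_k$ along the chosen subsequence. Pseudocompactness of $\CL(X)$ is applied exactly once, to the sequence $(C_k)$, to produce a $p$-limit in $\CL(X)$; Lemma \ref{ContZp} then converts this into the statement $Z_p\subseteq X$, i.e., that the $p$-limit of \emph{every} selector of $(C_k)$ lies in $X$, which simultaneously handles all $\kappa$ coordinates. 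If you want to salvage your outline, the piece to replace is the Key Lemma: instead of seeking a clopen set of good ultrafilters, trap all your sequences in one interval partition and let Lemma \ref{ContZp} do the uniformization.
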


\begin{proof}
	For each $\alpha<\kappa$, let $(x_\alpha(n): n \in \omega)$ be a sequence of natural numbers. Since $\kappa<\mathfrak h$ and for each $\alpha<\kappa$, $$D_\alpha=\{A \in [\omega]^\omega: x_\alpha|A \text{ is eventually strictly growing or } \text{eventually} \text{ constant}\}$$ is open dense, by taking a subsequence we may suppose that each $x_\alpha$ is either eventually strictly increasing or eventually constant, and by removing the eventually constant sequences, we may suppose that each sequence is eventually strictly increasing.
	
	Since $\mathfrak b\geq \mathfrak h$, let $a=(a(n): n \in \omega)$ be a strictly increasing sequence such that $a>^*x_\alpha$ for every $\alpha<\kappa$. For each $m \in \omega$, let $$I_m=\{\alpha<\kappa: \forall n\geq m \,(a(n)>x_\alpha(n)) \text{ and } x_\alpha|(\omega\setminus m) \text{ is strictly increasing}\}.$$
	
	Notice that $I_m$ is a growing sequence of subsets of $\kappa$. Recursively, we define a strictly increasing sequence $n_k$ of natural numbers satisfying: $n_0=0$ and, for every $k$, $\forall \alpha \in I_{n_k}\, a(n_k)<x_\alpha(n_{k+1})<a(n_{k+1}).$
	
	In order to do it, defined $n_0, \dots, n_k$, let $n_{k+1}=2a(n_k)+1$. Given $ \alpha \in I_{n_k}$, $a(k)<x_\alpha(n_{k+1})$ since $x_\alpha|(\omega\setminus n_k)$ is strictly increasing and $n_{k+1}>n_k$, it follows that:
	$$a(n_k)<a(n_k)+1\leq x_\alpha(2a(n_k)+1)=x_\alpha(n_{k+1})<a(n_{k+1}).$$
	
	Let $C_0$ be the discrete interval $[0, a_0]$, and for each $k$, let  $C_{k+1}=[a_{n_k}+1, a_{n_k+1}]$. Then the $C_k$'s are nonempty, finite and $\omega$ is their disjoint union. Since $\CL(X)$ is pseudocompact, there exists $p \in \omega^*$ such that the sequence $C_k$ has a $p$-limit. There exists $m \in \omega$ such that for each $k\geq m$, $x_\alpha(n_k) \in C_k$. Therefore, by Lemma $\ref{ContZp}$, $p-\lim x_\alpha(n_k) \in X$ and we are done.
\end{proof}

Theorem \ref{EverySmallKappa}, along with Theorem 2.3 along of \cite{NYAS:NYAS22} (which states that for a space $X$, $X^\omega$ is pseudocompact if, and only if there exists $\kappa\geq \omega$ such that $X$ is $(\kappa, \omega^*)-$pseudocompact), implies the following;

\begin{cor}
	For every subspace of $\beta \omega$ containing $\omega$, if $\CL(X)$ is pseudocompact then all powers of $X$ are pseudocompact.
\end{cor}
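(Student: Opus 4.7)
The plan is to chain together three facts: Theorem \ref{EverySmallKappa}, the quoted Theorem 2.3 of \cite{NYAS:NYAS22}, and the classical fact (already recorded in the introduction) that pseudocompactness of $X^\omega$ implies pseudocompactness of every power $X^\kappa$. Since the hypothesis is exactly that of Theorem \ref{EverySmallKappa}, the corollary amounts to a short concatenation of existing results; there is no substantive new content to prove.

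Concretely, first I would observe that $\omega < \omega_1 \leq \mathfrak h$, so Theorem \ref{EverySmallKappa} applied at $\kappa=\omega$ yields that $\omega^\omega$ is relatively countably compact in $X^\omega$, and hence that $X$ is $(\omega, \omega^*)$-pseudocompact. (The theorem in fact provides the stronger conclusion that $X$ is $(\kappa, \omega^*)$-pseudocompact for every $\kappa<\mathfrak h$, but only the single instance $\kappa=\omega$ is needed here.) Next, applying Theorem 2.3 of \cite{NYAS:NYAS22} — which asserts that $X^\omega$ is pseudocompact whenever there exists some $\kappa\geq \omega$ for which $X$ is $(\kappa, \omega^*)$-pseudocompact — gives that $X^\omega$ is pseudocompact. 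Finally, the classical fact quoted in the introduction, that pseudocompactness of $X^\omega$ forces pseudocompactness of every power $X^\kappa$, produces the desired conclusion.

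There is no genuine obstacle; the only point worth flagging is that Theorem \ref{EverySmallKappa} is much stronger than what is strictly required, since only the weakest instance ($\kappa=\omega$) is invoked. This is an artefact of the argument: once the countable power has been shown pseudocompact, every higher power comes for free from classical theory, so the information that $X$ is $(\kappa, \omega^*)$-pseudocompact for all $\kappa<\mathfrak h$ is not actually exploited in deriving this particular corollary.
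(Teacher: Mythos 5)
Your argument is correct and is essentially the paper's own: the paper derives the corollary by combining Theorem \ref{EverySmallKappa} (in particular the instance $\kappa=\omega<\mathfrak h$) with Theorem 2.3 of \cite{NYAS:NYAS22} to get pseudocompactness of $X^\omega$, and then invokes the well-known fact, already recorded in the introduction, that pseudocompactness of $X^\omega$ implies pseudocompactness of every power. Your observation that only the weakest instance of Theorem \ref{EverySmallKappa} is needed is accurate but does not change the route.
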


The Theorem above shows that if $\omega\subseteq X\subseteq \beta \omega$ is such that  $\CL(X)$ is pseudocompact, then $X$ is $(\kappa, \omega^*)$-pseudocompact for every $\kappa<\mathfrak h$. However, we can't increase $\kappa$ too much, as we shall see, we can't conclude $(\mathfrak b, \omega^*)$-pseudocompactness.

The following definition will be useful in some of the following constructions.

\begin{definition}Suppose $C$ is a sequence of finite sets. Given $X \in [\omega]^\omega$, a \textit{nice dissection} of $C$ over $X$ is a pair $(U, D)$ of sequences such that $U|X$ is increasing, $D|X$ is pairwise disjoint and for every $n \in X$, $U\cap D=\emptyset$ and $C(n)=U(n)\cup D(n)$.
\end{definition}

\begin{lemma}The following claims holds:\label{secc}
	\begin{enumerate}[label=\alph*)]
		\item For every sequence of finite sets $C$ and for every $Y \in [\omega]^\omega$, there exists $X \in [Y]^\omega$ such that $C$ admits a nice dissection over $X$.
		\item Suppose $K$ is a compactification of a discrete space $D$. Suppose $D\subseteq X\subseteq K$ is such that every sequence of pairwise disjoint finite nonempty subsets of $D$ has an accumulation point in $\CL(X)$. Then $\CL(X)$ is pseudocompact.
	\end{enumerate}

\end{lemma}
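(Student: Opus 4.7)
The plan is to prove parts (a) and (b) in order, using (a) inside (b).

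For (a), I would first extract an infinite $Y^* \subseteq Y$ along which each indicator $\chi_{C_n}(m)$ stabilizes (by diagonalizing on $m$), and set $F = \{m : m \in C_n \text{ for cofinitely many } n \in Y^*\}$. Then I would build $X = \{n_0 < n_1 < \cdots\} \subseteq Y^*$ recursively: having chosen $n_0, \ldots, n_k$, pick $n_{k+1}$ large enough that (i) $F \cap C_{n_k} \subseteq C_{n_{k+1}}$, which is possible because each element of $F$ lies in $C_n$ for cofinitely many $n \in Y^*$, and (ii) $C_{n_{k+1}}$ is disjoint from $\bigcup_{j \leq k}(C_{n_j} \setminus F)$, which is possible because each element outside $F$ lies in $C_n$ for only finitely many $n \in Y^*$. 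Setting $U(n_k) = F \cap C_{n_k}$ and $D(n_k) = C_{n_k} \setminus F$ then yields the required nice dissection, regardless of whether $F$ is finite or infinite.

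For (b), using the density of $[D]^{<\omega} \setminus \{\emptyset\}$ in $\CL(X)$ (recall that $X$ is Hausdorff as a subspace of the compactification $K$), it suffices to produce an accumulation point in $\CL(X)$ for every sequence $(F_n)$ of nonempty finite subsets of $D$. Apply (a) to such a sequence to obtain an infinite $X_0$ and a nice dissection $(U, E)$. By further thinning, assume either $E(n) = \emptyset$ for all $n \in X_0$ or $E(n) \neq \emptyset$ for all $n \in X_0$; in the latter case apply the hypothesis to the pairwise disjoint nonempty sequence $(E(n))_{n \in X_0}$ to obtain a free ultrafilter $p$ and a $p$-limit $G_E \in \CL(X)$, and in the former case set $G_E = \emptyset$. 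Writing $A = \bigcup_{n \in X_0} U(n) \subseteq D$, take as candidate accumulation point $F^* = \cl_X(A) \cup G_E$, which is a nonempty closed subset of $X$.

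To verify that $F^*$ is an accumulation point of $(F_n)$ in $\CL(X)$, fix a basic neighborhood $\langle V_1, \ldots, V_k \rangle$ of $F^*$. From $F^* \subseteq \bigcup_i V_i$ one gets $U(n) \subseteq \bigcup_i V_i$ automatically (as $U(n) \subseteq A \subseteq \cl_X(A) \subseteq F^*$), and $E(n) \subseteq \bigcup_i V_i$ for $p$-many $n$, using $G_E \subseteq \bigcup_i V_i$ and the $p$-limit property applied to the subbasic open $(\bigcup_i V_i)^+$. For each $i$, the condition $F^* \cap V_i \neq \emptyset$ splits: either $\cl_X(A) \cap V_i \neq \emptyset$, giving $A \cap V_i \neq \emptyset$ by density and hence $U(n) \cap V_i \neq \emptyset$ for all sufficiently large $n$; or $G_E \cap V_i \neq \emptyset$, giving $E(n) \cap V_i \neq \emptyset$ for $p$-many $n$ via the subbasic $V_i^-$. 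Intersecting the finitely many constraints yields a set in $p$ (hence infinite) of indices $n$ with $F_n \in \langle V_1, \ldots, V_k \rangle$.

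The main obstacle is the case where $U|X_0$ is not eventually constant, so $A$ is infinite. My first instinct was to exhibit $F^*$ as a literal $p$-limit of $(F_n)$, but this, via a generalization of Lemma \ref{ContZp}, would require controlling all $p$-limits of selection functions $g$ with $g(n) \in U(n) \cup E(n)$, including those that ``jump'' through the growing $U$ and produce ultrafilters the hypothesis does not put into $X$. The resolution is to drop the $p$-limit demand and settle for an accumulation point, which lets $\cl_X(A)$ enter $F^*$ automatically: $\cl_X(A) \in \CL(X)$ by definition, density of $A$ in $\cl_X(A)$ handles the ``$U$-side'' conditions of any basic neighborhood, and the hypothesis supplies the ``$E$-side'' via the $p$-limit $G_E$.
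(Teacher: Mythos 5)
Your proof is correct and follows essentially the same route as the paper's: part (a) isolates the elements of each $C(n)$ that persist into cofinitely many later $C(j)$ (your global stabilization set $F$ plays the role of the paper's nested sets $J_n$), and part (b) uses the same decomposition, taking as accumulation point the closure of the union of the increasing parts together with an accumulation point of the pairwise disjoint parts supplied by the hypothesis. The only difference is that you verify the final accumulation-point claim in detail, which the paper leaves implicit.
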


\begin{proof}
	
	For a), let $Y$ and $C$ be given. Recursively, we choose $x_n \in Y$ and a decreasing sequence $J_n\in [Y]^{\omega}$ such that:
	
	\begin{enumerate}
		\item $J_{n+1}\cap (x_n+1)=\emptyset$,
		\item $x_0 \in Y$, $x_{n+1} \in J_n$ for each $n \in \omega$, and
		\item $\forall t \in C(x_n)[(\forall j \in J_n\, (t \in C(j))) \vee (\forall j \in J_n\, (t \notin C(j))]$.
	\end{enumerate}

This is possible since each $C(n)$ is finite. Then $C$ admits a nice dissection over $X=\{x_n: n \in \omega\}$ by setting $U(x_n)=\{t \in C(x_n): \forall j \in J_n\, (t \in C(j))\}$ and $D(x_n)=\{t \in C(x_n): \forall j \in J_n\, (t \notin C(j))\}$.

	For b), since $E=[D]^{<\omega}\setminus \{\emptyset\}$ is dense in $\CL(X)$, it suffices to show that every sequence $(A_n: n \in \omega)$ of elements of $E$ has an accumulation point in $\CL(X)$. By a), let $J \in [\omega]^\omega$ be such that $A$ admits a nice dissection $(U, D)$ over $J$. If there exists an infinite $J'\subseteq J$ such that $D(l)=\emptyset$ for every $l \in J'$, it follows that for each $l \in J$, $A_l=U_l$, so $(A_l: l \in J')$ converges to $E=\cl\left(\bigcup_{l \in J'}U_l\right)$ and we are done. Else, there exists an infinite $J'\subseteq J$ such that $D(l)\neq \emptyset$ for every $l \in J'$. By hypothesis, $(D(l): l \in J')$ has an accumulation point $F \in \CL(X)$, therefore $(A_l: l \in J')$ has $E\cup F$ as an accumulation point, where $E$ is as above.
\end{proof}

\begin{theorem}\label{NonPseudo} Let $\theta, \kappa\leq \mathfrak c$ be infinite cardinals. We give $\kappa$ the discrete topology. Suppose that there exists $\mathcal A \subseteq [\kappa]^{\omega}$ such that $|\mathcal A|\leq\theta$ and that for every sequence $\mathcal C=(\mathcal C_n: n \in \omega)$ of finite nonempty pairwise disjoint subsets of $\kappa$ there exists $E \in [\omega]^\omega$ and $B \in \mathcal A$ such that $|B\cap \bigcup_{n \in E}\mathcal C_n|<\omega.$
	
	Then there exists $X$ such that $\kappa\subseteq X\subseteq \beta \kappa$, $\CL(X)$ is pseudocompact and $X$ is not $(\theta, \omega^*)$-pseudocompact.
\end{theorem}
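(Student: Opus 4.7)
My plan is to use the singleton sequences $V^\alpha = (\{b^\alpha_n\})_{n \in \omega}$, where $b^\alpha \colon \omega \to B_\alpha$ enumerates $B_\alpha \in \mathcal{A}$, as the family of $\theta$ sequences of open subsets of $X$ that will witness failure of $(\theta, \omega^*)$-pseudocompactness. Since $\beta\kappa$ is Hausdorff, a $p$-limit of the singleton sequence $V^\alpha$ in $X$ can only be the unique ultrafilter limit $y^\alpha_p := b^\alpha_*(p) \in \cl_{\beta\kappa} B_\alpha$, and it exists in $X$ exactly when $y^\alpha_p \in X$. So my goal is to construct $X$ with $\kappa \subseteq X \subseteq \beta\kappa$ satisfying (a) for every $p \in \omega^*$ there is some $\alpha < \theta$ with $y^\alpha_p \notin X$, and (b) for every pairwise disjoint sequence $\mathcal{C}$ of finite nonempty subsets of $\kappa$ there is $q \in \omega^*$ with $Z_q^{\mathcal{C}} \subseteq X$. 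Condition (a) makes $(V^\alpha)_{\alpha<\theta}$ witness non-$(\theta, \omega^*)$-pseudocompactness, while condition (b), via Lemma \ref{ContZp} and Lemma \ref{secc}(b), yields pseudocompactness of $\CL(X)$.

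I will build $X$ by a transfinite recursion of length $\mathfrak{c}$ over an enumeration $(\mathcal{C}^\eta)_{\eta<\mathfrak{c}}$ of all pairwise disjoint sequences of finite nonempty subsets of $\kappa$. At stage $\eta$, the hypothesis supplies $\gamma_\eta < \theta$ and $E^\eta \in [\omega]^\omega$ with $\mathcal{C}^\eta_n \cap B_{\gamma_\eta} = \emptyset$ for every $n \in E^\eta$ (discarding finitely many terms). I choose a free ultrafilter $q^\eta$ on $\omega$ extending $E^\eta$ together with further constraints inherited from earlier stages, and set $X := \kappa \cup \bigcup_{\eta < \mathfrak{c}} Z_{q^\eta}^{\mathcal{C}^\eta}$. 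Condition (b) is then automatic. The key observation is that $Z_{q^\eta}^{\mathcal{C}^\eta} \cap \cl_{\beta\kappa} B_{\gamma_\eta} = \emptyset$, because every $g \in \mathcal{G}^{\mathcal{C}^\eta}$ has $g(n) \notin B_{\gamma_\eta}$ for $n \in E^\eta \in q^\eta$, so $g_*(q^\eta) \notin \cl B_{\gamma_\eta}$. Equivalently, the ``$\gamma_\eta$-row'' $\{y^{\gamma_\eta}_p : p \in \omega^*\}$ is spared at stage $\eta$.

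The principal obstacle is condition (a): each stage individually spares only its own $\gamma_\eta$-row, and a later stage could well populate a previously spared row. My plan to overcome this is to require, at stage $\eta$, that $q^\eta$ also extend each set $\{n : \mathcal{C}^\eta_n \cap B_{\gamma_\xi} = \emptyset\}$ for $\xi < \eta$ (whenever this set is infinite). With this reinforcement, $Z_{q^\eta}^{\mathcal{C}^\eta}$ avoids $\cl B_{\gamma_\xi}$ for every $\xi \leq \eta$, so in particular the $\gamma_0$-row is preserved throughout the construction and $\alpha = \gamma_0$ witnesses (a). The hardest step is verifying that such a $q^\eta$ actually exists at every stage, which boils down to the strong finite intersection property of the growing constraint family. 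I expect to obtain this by iteratively applying the hypothesis to successively refined residual sequences $\bigl(\mathcal{C}^\eta_n \setminus \bigcup_{\xi \in S} B_{\gamma_\xi}\bigr)_n$ for finite $S \subseteq \eta$, exploiting that at stage $\eta$ only $|\eta| < \mathfrak{c}$ previous constraints need to be absorbed, and using a pseudo-intersection argument when the index $\eta$ is a limit.
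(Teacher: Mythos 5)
Your setup is right: the singleton sequences $(\{b^\alpha(n)\})_{n\in\omega}$ enumerating the members of $\mathcal A$ are exactly the witnesses the paper uses (there they appear as bijections $f_A:\omega\rightarrow A$), and your targets (a) and (b) are the correct ones. The fatal problem is the method you propose for achieving (a): sparing a single fixed row $\gamma_0$ is actually \emph{incompatible} with (b). Indeed, $\mathcal C_n=\{b^{\gamma_0}(n)\}$ is itself a sequence of pairwise disjoint finite nonempty subsets of $\kappa$, so (b) forces some $q\in\omega^*$ with $Z_q^{\mathcal C}=\{q-\lim b^{\gamma_0}\}\subseteq X$; the $\gamma_0$-row is necessarily hit. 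Concretely, at the stage $\eta$ where $\mathcal C^\eta=(\{b^{\gamma_0}(n)\})_n$ the constraint set $\{n:\mathcal C^\eta_n\cap B_{\gamma_0}=\emptyset\}$ is empty, so no $q^\eta$ can do what you ask, and your caveat ``whenever this set is infinite'' silently abandons the row exactly when it matters. The same obstruction defeats the proposed repair via residual sequences $(\mathcal C^\eta_n\setminus\bigcup_{\xi\in S}B_{\gamma_\xi})_n$, which can become empty (the hypothesis then says nothing); moreover the hypothesis yields only one $B\in\mathcal A$ per sequence, so there is no reason the constraint sets for distinct $\xi$ have the finite intersection property. (The pseudo-intersection step at limit $\eta$ with $|\eta|\geq\mathfrak p$ is a further, secondary problem.)

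The necessary fix---and what the paper does---is to let the index $\alpha$ witnessing (a) depend on $p$. Alongside $X_\alpha=Z_{q_\alpha}$ one maintains ``forbidden'' sets $Y_\alpha\subseteq\beta\kappa$ disjoint from $X$: whenever an ultrafilter $p$ first becomes dangerous, i.e.\ $p-\lim f_A\in X_\alpha$ for some $A\in\mathcal A$ (the set $P_\alpha$ of such new $p$), one reserves the point $p-\lim f_B\in Y_\alpha$, where $B$ is the member of $\mathcal A$ supplied by the hypothesis for $\mathcal C_\alpha$; since $g[E]\cap B$ is finite for $g\in G_\alpha$, this reserved point is not in $X_\alpha$. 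One then must choose each $q_\alpha\ni E$ so that $Z_{q_\alpha}$ misses $\bigcup_{\beta<\alpha}Y_\beta$, which follows from a counting argument: at most $\mathfrak c$ ultrafilters $r$ satisfy $r-\lim g\in\bigcup_{\beta<\alpha}Y_\beta$ for some $g\in G_\alpha$, while $2^{\mathfrak c}$ ultrafilters contain $E$. At the end every $p$ with some $p-\lim f_A\in X$ lies in some $P_\alpha$ and hence has a reserved $p-\lim f_B\notin X$. Without a bookkeeping device of this kind, your construction cannot close.
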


\begin{proof} Enumerate all possible sequences of pairwise disjoint finite nonempty subsets of $\kappa$ as $( {\mathcal C}_\alpha :\, 0<\alpha <{\mathfrak c})$. For every $\alpha$ such that $0<\alpha<\mathfrak c$, let $G_\alpha=\{f \in \kappa^\omega: f(n) \in \mathcal C_\alpha(n)\}$. For each $A \in \mathcal A$, let $f_A:\omega\rightarrow A$ be 1-1 and onto.
	
	Let $X_0=\kappa$. Recursively for $0<\alpha<\mathfrak c$, we define $X_\alpha\subseteq \beta\kappa$, a free ultrafilter $q_\alpha$ on $\omega$, $P_\alpha\subseteq \beta \omega $ and $Y_\alpha\subseteq \beta \kappa$ satisfying:

	\begin{enumerate}
		
		\item $X_\alpha=\{q_\alpha-\lim g: g \in G_\alpha\}$,

		\item $P_\alpha=\{p \in \omega^*:p \notin \bigcup_{0<\beta<\alpha}P_\beta\,\wedge\, \exists A \in \mathcal A\,(p-\lim f_A \in X_\alpha)\}$,
		\item $(\bigcup_{\beta\leq\alpha}X_\beta)\cap(\bigcup_{0<\beta\leq \alpha} Y_\beta)=\emptyset$ (for every $0<\alpha<\mathfrak c$),
		\item $\forall p \in P_\alpha \,\exists B\in \mathcal A\, (p-\lim f_B\in Y_\alpha)$, and
		\item $|Y_\alpha|\leq \mathfrak c$.
	\end{enumerate}
	
	Suppose we have defined $X_\alpha, q_\alpha, P_\alpha, Y_\alpha$ for every $\alpha$ such that $0<\alpha<\beta$ for some $\beta<\mathfrak c$.
	
 There exists $E \in [\omega]^\omega$ and $B \in \mathcal A$ such that $|B\cap \bigcup_{n \in E}\mathcal C_\alpha(n)|<\omega$. Notice that given $g \in G_\alpha$ and $z \in \beta \kappa$, there exists at most one $r$ free ultrafilter such that $r-\lim g=z$, therefore, $Y=\{r \in \omega^*: \exists g \in \mathcal G_\alpha\,(r-\lim g \in \bigcup _{0<\beta < \alpha } Y_\beta)\}$ has cardinality at most $\mathfrak c$. Let $q_\alpha \in \omega^*\setminus Y$ be such that $E \in q_\alpha$.
	
	Let $X_\alpha$ as in 1 and $P_\alpha$ as in 2. Let $Y_\alpha=\{p-\lim f_B: p \in P_\alpha\}$. 1, 2, 4 and 5 clearly holds. To see that 3 holds, we first check that $Y_\alpha \cap X_\alpha=\emptyset$. Given $g \in G_\alpha$ and $p \in P_\alpha$, it follows that $g[E] \in q_\alpha-\lim g$ and  $B \in p-\lim f_B$. Since $g[E]\cap B$ is finite and $g$ is injective, it follows that $p-\lim f_B\neq q_\alpha -\lim g$. This proves $Y_\alpha \cap X_\alpha= \emptyset$. If $0<\gamma<\alpha$ and $Y_\alpha \cap X_\gamma \neq \emptyset$, then $Y_\alpha \cap \bigcup_{0<\beta\leq \alpha} Y_\beta\neq \emptyset$, a contradiction. This finishes the recursive construction.
	
	 Let $X=\bigcup_{\alpha <{\mathfrak c}}X_\alpha$. By construction, $\{q_\alpha-\lim g: g \in G_\alpha\} \subseteq X$, so by Lemma \ref{ContZp}, $q_\alpha-\lim {\mathcal C}_\alpha\in\CL(X)$, so by Lemma \ref{secc} b), $\CL(X)$ is pseudocompact.
	To see that $X$ is not $(\theta, \omega^*)-$pseudocompact, it suffices to show that for every free ultrafilter $p$ there exists $B \in \mathcal A$ such that $p-\lim f_B \notin X$, so fix a free ultrafilter $p$. If for some $f \in \mathcal A$, $p-\lim f \in X$, let $\alpha$ be the least ordinal for which $p-\lim f\in X_\alpha$. Then $p \notin P_\beta$ for $\beta < \alpha$, thus,
	$p \in P_\alpha$. Therefore, by 4, there exists $h \in \mathcal A$ such that $p-\lim h\in Y_\alpha$. Since $X \cap Y_\alpha = \emptyset$ we are done.
\end{proof}

\begin{example}\label{ExistsB} There exists a space $X$, $\omega \subseteq X \subseteq \beta \omega$ such that $\CL(X)$ is pseudocompact
	and $X$ is not $(\mathfrak b , \omega^*)$-pseudocompact.
\end{example}

\begin{proof}Let $\mathcal B$ be a unbounded family of cardinality $\mathfrak b$ such that each $g \in \mathcal B$ is strictly increasing. We apply Theorem \ref{NonPseudo} with $\kappa=\omega$ and $\theta=\mathfrak b$, where $\mathcal A=\{g[\omega]: g \in\mathcal B\}$.
	
	Let $\mathcal C$ be a sequence of pairwise disjoint finite nonempty subsets of $\omega$. Let $f:\omega\rightarrow \omega$ be such that $f(m)=1+\max \bigcup_{k\leq 2m}\mathcal C_k$ for each $m \in \omega$. There exists $g \in \mathcal B$ such that $N=\{m \in \omega: g(m)\geq f(m)\}$ is infinite. We claim that $E=\{n \in \omega:\mathcal C_n\cap  g[\omega]=\emptyset\}$ is infinite. Notice that if $m\in N$, then for every $p\geq m$, $g(p) \notin \bigcup_{k\leq 2m}\mathcal C_k$. Therefore at most $m$ elements of $\{\mathcal C_0, \dots, \mathcal C_{2m}\}$ have elements of $g[\omega]$, so $|E|\geq 2m+1-m=m+1$. Since $m \in N$ is arbitrary, $E$ is infinite. Then $g[\omega]\cap \bigcup_{n \in E}\mathcal C_n=\emptyset$ and the proof is complete.
\end{proof}

It is consistent that $\mathfrak b=\omega_1<\mathfrak c$. It is also consistent that $\omega_1<\mathfrak t$. Therefore, Theorems \ref{EverySmallKappa} and \ref{ExistsB} imply the following corollary:

\begin{cor}The existence of a space $X\subseteq \beta \omega$ containing $\omega$ such that $\CL(X)$ is pseudocompact and $X$ is not $(\omega_1, \omega^*)$-pseudocompact is independent of the axioms of ZFC+$\neg$CH.
\end{cor}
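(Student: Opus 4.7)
The plan is to prove the independence by exhibiting two models of $\mathrm{ZFC} + \neg\mathrm{CH}$: one in which such an $X$ exists, and one in which it does not.

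For the existence side, I would work in a model in which $\mathfrak b = \omega_1 < \mathfrak c$; such models exist (for instance, the Cohen model or any model in which $\mathrm{cov}(\mathcal M) = \omega_1 < \mathfrak c$). In any such model, Example \ref{ExistsB} produces in $\mathrm{ZFC}$ a subspace $X \subseteq \beta\omega$ with $\omega \subseteq X$ such that $\CL(X)$ is pseudocompact and $X$ fails to be $(\mathfrak b, \omega^*)$-pseudocompact. Since $\mathfrak b = \omega_1$ in the chosen model, this $X$ is automatically not $(\omega_1, \omega^*)$-pseudocompact, and $\mathfrak c > \omega_1$ guarantees $\neg\mathrm{CH}$.

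For the non-existence side, I would work in a model in which $\omega_1 < \mathfrak t$; such a model exists (for example, under $\mathrm{MA} + \neg\mathrm{CH}$, one has $\mathfrak t = \mathfrak c > \omega_1$). Since $\mathfrak t \leq \mathfrak h$, in this model $\omega_1 < \mathfrak h$. Now suppose, for contradiction, that some $X \subseteq \beta\omega$ with $\omega \subseteq X$ has $\CL(X)$ pseudocompact. By Theorem \ref{EverySmallKappa}, $X$ is $(\kappa, \omega^*)$-pseudocompact for every $\kappa < \mathfrak h$, and in particular for $\kappa = \omega_1$. So no $X$ with the desired failure can exist; note again that $\mathfrak c \geq \mathfrak t > \omega_1$, so $\neg\mathrm{CH}$ holds.

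No step here is an obstacle of substance: once Example \ref{ExistsB} and Theorem \ref{EverySmallKappa} are in place, the only work is to cite the standard consistency results that $\mathfrak b = \omega_1 < \mathfrak c$ and $\omega_1 < \mathfrak t$ are each consistent with $\mathrm{ZFC}$, both of which can be found in \cite{CombinatorialBlass} (and are realized, respectively, by adding $\omega_2$ Cohen reals to a model of $\mathrm{GCH}$, and by Martin's Axiom plus $\neg\mathrm{CH}$). The mild subtlety is to make sure both models satisfy $\neg\mathrm{CH}$, which is immediate in each case from $\mathfrak c > \omega_1$.
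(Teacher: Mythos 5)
Your argument is correct and is essentially identical to the paper's: the corollary is obtained by combining the consistency of $\mathfrak b=\omega_1<\mathfrak c$ with Example \ref{ExistsB} for the existence direction, and the consistency of $\omega_1<\mathfrak t\leq\mathfrak h$ with Theorem \ref{EverySmallKappa} for the non-existence direction, with $\neg$CH holding in both models since $\mathfrak c>\omega_1$.
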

 However, the same doesn't apply if we consider subspaces of $\beta\omega_1$. Using the techniques above, we show that the answer is affirmative if we consider $\beta \kappa$ for $\omega_1\leq \kappa\leq \mathfrak c$ instead of $\beta \omega$.

\begin{example} Suppose $\omega_1\leq \kappa\leq \mathfrak c$. Then there exists a space $X$, $\kappa \subseteq X \subseteq \beta \kappa$ such that $\CL(X)$ is pseudocompact
	and $X$ is not $(\omega_1 , \omega^*)$-pseudocompact.
\end{example}

\begin{proof}We apply Theorem \ref{NonPseudo} with $\kappa$ and $\theta=\omega_1$, where  $\mathcal A$ be a partition of $\omega_1$ into $\omega_1$ subsets of cardinality $\omega$.
	
	Let $\mathcal C$ be a sequence of pairwise disjoint finite nonempty subsets of $\kappa$. We must show that there exists $E \in [\omega]^\omega$ and $A \in \mathcal A$ such that $|A\cap \bigcup_{n \in E}\mathcal C_n|<\omega.$

	Let $E=\omega$. Since $\bigcup_{n \in \omega} \mathcal C_n$ is countable, there exists $A \in \mathcal A$ such that $A\cap \bigcup_{n \in \omega} \mathcal C_n=\emptyset$.

\end{proof}

Finally, we show that if $\omega\subseteq X\subseteq \beta \omega$ and $\CL(X)$ is pseudocompact, then all powers of $\CL(X)$ are pseudocompact. In fact, we show that $\CL(X)$ is $(\kappa, \omega^*)$-pseudocompact for every $\kappa<\mathfrak h$.
\begin{theorem}
	If $\omega \subseteq X \subseteq \beta \omega$ and $\CL(X)$ is pseudocompact then $\CL(X)$ is $(\kappa, \omega^*)$-pseudocompact for every $\kappa<\mathfrak h$.
\end{theorem}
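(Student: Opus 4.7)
The plan is to reduce to sequences of finite subsets of $\omega$ by density, use $\kappa<\mathfrak h$ open dense subsets of $[\omega]^\omega$ together with Lemma~\ref{secc}(a) to thin all sequences simultaneously to a common well-behaved subsequence, and then adapt the strategy of Theorem~\ref{EverySmallKappa}: concentrate the pseudocompactness of $\CL(X)$ on a single sequence of pairwise disjoint intervals that envelops the disjoint pieces of all $\kappa$ sequences at once.

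In detail, given sequences $(V^\alpha_n)_{n\in\omega}$ of nonempty open sets in $\CL(X)$ for $\alpha<\kappa$, since $[\omega]^{<\omega}\setminus\{\emptyset\}$ is dense in $\CL(X)$ I pick $F^\alpha_n\in V^\alpha_n\cap([\omega]^{<\omega}\setminus\{\emptyset\})$; a $q$-limit of $(F^\alpha_n)_n$ is automatically a $q$-limit of $(V^\alpha_n)_n$, so it suffices to produce $q\in\omega^*$ such that each $(F^\alpha_n)_n$ has a $q$-limit in $\CL(X)$. For each $\alpha$ the family of $A\in[\omega]^\omega$ for which $F^\alpha|A$ admits a nice dissection $(U^\alpha,D^\alpha)$ and for which $D^\alpha|A$ is either eventually empty or everywhere nonempty with $\min D^\alpha$ strictly increasing is open dense in $[\omega]^\omega$ (density comes from Lemma~\ref{secc}(a) together with the fact that $\min D^\alpha_n\to\infty$ whenever $D^\alpha$ is pairwise disjoint). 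Since $\kappa<\mathfrak h$, the intersection of these $\kappa$ families is nonempty; fixing $A$ in it and passing to $A$, I assume $A=\omega$, and split $\kappa=\kappa_A\cup\kappa_B$ where $\kappa_B=\{\alpha:D^\alpha_n=\emptyset\text{ eventually}\}$.

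For $\alpha\in\kappa_A$ I set $\psi^\alpha(n)=1+\max D^\alpha_n$ and $h^\alpha(m)=\min\{n:\min D^\alpha_{n'}>m\text{ for all }n'\geq n\}$; these lie in $\omega^\omega$. Using $\mathfrak b\geq\mathfrak h>|\kappa_A|$, I pick strictly increasing $a,h\in\omega^\omega$ with $a(n)\geq n$ dominating all $\psi^\alpha$ and all $h^\alpha$, respectively; then I recursively define $n_0=0$, $n_{k+1}=\max(n_k+1,h(a(n_k)))$, and set $C_k=[a(n_{k-1})+1,a(n_k)]$ (with $a(n_{-1})=-1$). The $C_k$ are pairwise disjoint nonempty finite subsets of $\omega$, and for each $\alpha\in\kappa_A$ the inequalities $\psi^\alpha(n_k)<a(n_k)$ and $\min D^\alpha_{n_k}>a(n_{k-1})$ hold for $k$ large, giving $D^\alpha_{n_k}\subseteq C_k$ for all $k\geq k_\alpha$. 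Since the $C_k$ are distinct isolated points of $\CL(X)$, pseudocompactness of $\CL(X)$ yields a free $p\in\omega^*$ with $p-\lim C_k\in\CL(X)$, whence $Z_p\subseteq X$ by Lemma~\ref{ContZp}. For each $\alpha\in\kappa_A$, any choice function for $(D^\alpha_{n_k})_k$ agrees cofinitely with a choice function for $(C_k)$, so its $p$-limit lies in $Z_p\subseteq X$; Lemma~\ref{ContZp} then gives that $p-\lim_k D^\alpha_{n_k}$ exists in $\CL(X)$.

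Pushing $p$ forward along $k\mapsto n_k$ produces a free ultrafilter $q\in\omega^*$ (with $\{n_k:k\in\omega\}\in q$) for which $q-\lim D^\alpha=p-\lim_k D^\alpha_{n_k}\in\CL(X)$ for every $\alpha\in\kappa_A$. A direct Vietoris-topology verification shows that for any free ultrafilter, a monotone increasing sequence of nonempty finite sets $(U^\alpha_n)_n$ has $q$-limit $\cl_X\bigcup_n U^\alpha_n\in\CL(X)$. Combining these via continuity of the union map $\CL(X)\times\CL(X)\to\CL(X)$ (with the trivial subcase that $U^\alpha$ is identically empty, where $F^\alpha_n=D^\alpha_n$), I get $q-\lim F^\alpha\in\CL(X)$ for $\alpha\in\kappa_A$; for $\alpha\in\kappa_B$ we have $F^\alpha_n=U^\alpha_n$ eventually and $q-\lim F^\alpha=\cl_X\bigcup_n U^\alpha_n$ exists as well. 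Hence $q$ witnesses that $\CL(X)$ is $(\kappa,\omega^*)$-pseudocompact. The main obstacle is the simultaneous alignment of all the disjoint pieces $D^\alpha_{n_k}$ ($\alpha\in\kappa_A$) inside one pairwise disjoint sequence $(C_k)$ of subsets of $\omega$; this is precisely where both $\mathfrak h$ (for the common thinning giving nice dissections with strictly increasing minima) and $\mathfrak b\geq\mathfrak h$ (for the double domination by $a$ and $h$) are essential.
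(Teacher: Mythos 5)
Your proposal is correct and follows essentially the same route as the paper's proof: reduce to sequences in the dense set $[\omega]^{<\omega}\setminus\{\emptyset\}$, use $\kappa<\mathfrak h$ open dense families to obtain simultaneous nice dissections, dominate the relevant functions using $\mathfrak b\geq\mathfrak h$ to trap the disjoint pieces $D^\alpha_{n_k}$ inside one pairwise disjoint interval sequence $(C_k)$, and then transfer the $p$-limit of $(C_k)$ to each $D^\alpha$ via Lemma~\ref{ContZp} while the increasing parts $U^\alpha$ converge to the closure of their union. The only (cosmetic) divergences are that you control the lower ends of the $D^\alpha_{n_k}$ by dominating the auxiliary functions $h^\alpha$ rather than by thinning so that $\min D^\alpha$ and $\max D^\alpha$ are eventually strictly increasing as the paper does, and your open dense families should read ``eventually nonempty'' rather than ``everywhere nonempty'' to be genuinely open under $\subseteq^*$ --- a harmless adjustment your later argument already accommodates.
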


\begin{proof}Let $\kappa<\mathfrak h$ and $E=[\omega]^{<\omega}\setminus \{0\}$.
	 Since $E$ is a dense subset of $\CL(X)$, it suffices to show that $E^\kappa$ is relatively countably compact in $\CL(X)^\kappa$. So let $A_\alpha$ $(\alpha<\kappa)$ be a collection of sequences on $E$.
	
	 For each $\alpha<\kappa$, let $$\mathcal U_\alpha=\{I \in [\omega]^\omega: \exists m \in \omega \text{ s.t. } A_\alpha \text{ admits a nice dissection over } I\setminus m\}.$$
	
	 By Lemma \ref{secc}(a), each $\mathcal U_\alpha$ is open dense, and since $\kappa<\mathfrak h$, there exists $I \in \bigcap\{\mathcal U_\alpha: \alpha<\kappa\}$. Let $(U_\alpha, D_\alpha)$ be a nice dissection of $A_\alpha$ over $I\setminus m^0_\alpha$ for some $m^0_\alpha \in \omega$.
	
	 For each $\alpha<\kappa$, $$\mathcal V_\alpha=\{J \in [I]^\omega: D_\alpha|(J\setminus m^0_\alpha) \text{ is eventually empty or eventually not empty}\}.$$

	Since each $\mathcal V_\alpha$ is open dense on $[I]^\omega$, there exists $J \in \bigcap\{\mathcal V_\alpha: \alpha<\kappa\}$. So for each $\alpha<\kappa$ there exists $m^1_\alpha\geq m^0_\alpha$ such that either $(\forall n\in J\setminus m^1_\alpha)\, (D_\alpha(n) \neq \emptyset)$ or $(\forall n\in J\setminus m^1_\alpha)\, (D_\alpha(n) = \emptyset)$. Let $T_0=\{\alpha<\kappa: (\forall n\in J\setminus m^1_\alpha)\, (D_\alpha(n) \neq \emptyset)\}$ and $T_1=\{\alpha<\kappa: (\forall n\in J\setminus m^1_\alpha)\,(D_\alpha(n) = \emptyset)\}$.
	
	For every $\alpha \in T_0$ and $n \in J\setminus  m^1_\alpha$, let $f_\alpha(n)=\min D_\alpha(n)$, $g_\alpha(n)=\max D_\alpha(n)$. If $n<m^1_\alpha$, let $f_\alpha(n)=g_\alpha(n)=0$.  Since for each $\alpha \in T_0$
	
	$$\mathcal W_\alpha=\{Z \in [J]^\omega: f_\alpha|X \text{ and } g_\alpha|X \text{ are eventually strictly increasing}\}$$
	
	 is an open dense set, there exists $Z \in [J]^\omega$ and a family of natural numbers $(m_\alpha: \alpha \in T_0)$ such that $m_\alpha\geq m^1_\alpha$, $m_\alpha \in Z\setminus m^1_\alpha$ and $f_\alpha|(Z\setminus m_\alpha), g_\alpha|(Z\setminus m_\alpha)$ are strictly increasing. For $\alpha \in T_1$, let $m_\alpha=m^1_\alpha$. 
	
	Let $j:\omega\rightarrow Z$ be a strictly increasing bijection. For each $\alpha\leq \kappa$, let $\tilde D_\alpha=D_\alpha \circ j$, $\tilde U_\alpha=U_\alpha \circ j$, $\tilde A_\alpha=A_\alpha \circ j$ and $\tilde m_\alpha= j^{-1}(m_\alpha)$. For $\alpha \in T_0$, let $\tilde f_\alpha=f_\alpha\circ j, \tilde g_\alpha=g_\alpha\circ j$. Then $(\tilde D_\alpha, \tilde U_\alpha)$ is a nice dissection of $\tilde A_\alpha$ over $\omega\setminus \tilde m_\alpha$. Also, if $n\in \omega\setminus \tilde m_\alpha$, then if $\alpha \in T_0$, $\tilde D_\alpha(n)\neq \emptyset$, $\tilde f_\alpha(m)=\min \tilde D_\alpha(m)\leq \max \tilde D_\alpha(m)=\tilde g_\alpha(m)$, and if $\alpha \in T_1$, $\tilde D_\alpha(n)=\emptyset$. Finally, notice that $\tilde f_\alpha|(\omega\setminus \tilde m_\alpha), g_\alpha|(\omega\setminus \tilde m_\alpha)$ are strictly increasing. Since $\mathfrak h\leq \mathfrak b$, there exists $a:\omega\rightarrow \omega$ such that $a\geq^*\tilde f_\alpha, \tilde g_\alpha$ for every $\alpha\in T_0$.

	For each $m \in \omega$, let $$I_m=\{\alpha\in T_0: \forall n\geq m \,(a(n)>\tilde f_\alpha(n), \tilde g_\alpha(n)) \text{ and } \tilde m_\alpha \leq m\}.$$
	
	Notice that $I_m$ is a growing sequence of subsets of $T_0$. Recursively, we define a strictly increasing sequence $n_k$ of natural numbers satisfying: $n_0=0$ and, for every $k$, $\forall \alpha \in I_{n_k}\, a(n_k)<\tilde f_\alpha(n_{k+1})\leq \tilde g_\alpha(n_{k+1})<a(n_{k+1}).$
	
	Suppose we have defined $n_0, \dots, n_k$. Let $n_{k+1}=2a(n_k)+1$. Given $\alpha \in I_{n_k}$, $a(k)<\tilde f_\alpha(n_{k+1})$ since $\tilde f_\alpha|(\omega\setminus n_k)$ is strictly increasing and $n_{k+1}>n_k$, it follows that:
	$$a(n_k)<a(n_k)+1\leq f_\alpha(2a(n_k)+1)=f_\alpha(n_{k+1})\leq g_\alpha(n_{k+1})<a(n_{k+1}).$$
	
	Let $C_0$ be the discrete interval $[0, a_0]$, and for each $k$, let  $C_{k+1}=[a_{n_k}+1, a_{n_k+1}]$. Then the $C_k$'s are nonempty, finite and $\omega$ is their disjoint union. Notice that $\tilde D_\alpha(n_k)\subset C_k$ for each $\alpha \in T_0$ and $k \in \omega$. Since $\CL(X)$ is pseudocompact, there exists $p \in \omega^*$ such that $p-\lim C_k$ exists. Then by Lemma \ref{ContZp}, given $\alpha \in T_0$ and $f:\omega\rightarrow \omega $ such that $f(k) \in \tilde D_\alpha(n_k) \subseteq C_k$ for each $k$, it follows that $p-\lim f \in X$, which implies, again by Lemma \ref{ContZp}, that there exists $\exists p-\lim \tilde D_\alpha(n_k) \in \CL(X)$. $\tilde U_\alpha$ converges when $\tilde U_\alpha(n)$ is not eventually empty, therefore $p-\lim \tilde A_\alpha$ exists for every $\alpha<\kappa$, which completes the proof.
\end{proof}

\section{Questions}
Example \ref{exampleH} show that there is a space $X$ such that $\omega\subseteq X\subseteq \beta X$, $X^\kappa$ is countably compact for every $\kappa<\mathfrak h$ and $\CL(X)$ is not pseudocompact. Example \ref{exampleT} shows that if $\mathfrak t<\mathfrak c$ and $2^\mathfrak t=\mathfrak c$ then there exists a space $X$ such that $\omega\subseteq X\subseteq \beta X$, $X^\mathfrak t$ is countably compact and $\CL(X)$ is not pseudocompact. This is true on Cohen's model, where $\mathfrak c=2^\mathfrak t$ and $\mathfrak t=\mathfrak h=\omega_1$ (see \cite{CombinatorialBlass}, \cite{kunen2011set}), however, the following remains open:

\begin{question}Is it consistent that $\mathfrak t<\mathfrak h<\mathfrak c$ and there a space $X$ such that $\omega\subseteq X\subseteq \beta X$, $X^\mathfrak h$ is countably compact and $\CL(X)$ is not pseudocompact?\end{question}

Theorem \ref{EverySmallKappa} states that if $\omega\subseteq X\subseteq \beta \omega$ and $\CL(X)$ is pseudocompact, then $X$ is $(\kappa, \omega^*)$-pseudocompact for every $\kappa<\mathfrak h$. Example \ref{ExistsB} shows that there exists $X$ such that $\omega\subseteq X\subseteq \beta \omega$, $\CL(X)$ is pseudocompact but $X$ is not $(\mathfrak b, \omega^*)$-pseudocompact.

\begin{question}Is there a space $X$ such that $\omega\subseteq X\subseteq \beta \omega$, $\CL(X)$ is pseudocompact but $X$ is not $(\mathfrak h, \omega^*)$-pseudocompact?\end{question}

\section*{Acknowledgements}
All the authors received support from FAPESP (process numbers 2014/16955-2 - post doctoral project, 2015/15166-7 - master's project,  and 2016/26216-8 - research project). The third author also received support from CNPq (307130/2013-4 - produtividade em pesquisa).

\bibliographystyle{amsplain}

\end{document}